\tikzset{
  optree/.style={scale=.5,thick,grow'=up,level distance=10mm,inner sep=1pt},
  comp/.style={draw=none,circle,fill,line width=0,inner sep=0pt},
  dot/.style={draw,circle,fill,inner sep=0pt,minimum width=3pt},
  circ/.style={draw,circle,inner sep=1pt,minimum width=4mm},
  emptycirc/.style={draw,circle,inner sep=1pt,minimum width=2mm},
  root/.style={level distance=10mm,inner sep=1pt},
  leaf/.style={draw=none,circle,fill,line width=0,inner sep=0pt},
  nodot/.style={draw,circle,inner sep=1pt},
}
\definecolor{Chocolat}{rgb}{0.36, 0.2, 0.09}
\definecolor{BleuTresFonce}{rgb}{0.215, 0.215, 0.36}
\let\oldtocsection=\tocsection
\let\oldtocsubsection=\tocsubsection
\renewcommand{\tocsection}[2]{\hspace{0em}\vspace{0.1em}\rule{0pt}{14pt}\oldtocsection{#1}{#2}\bf}
\renewcommand{\tocsubsection}[2]{\hspace{2em}\oldtocsubsection{#1}{#2}}
\def\k{\mathbb{k}}
\def\F{\mathrm{F}}
\def\d{\mathrm{d}}
\def\P{\mathcal{P}}
\def\C{\mathcal{C}}
\def\oC{\overline{\mathcal{C}}}
\def\1{\mathbb{1}}
\def\I{\mathcal{I}}
\def\id{\mathrm{id}}
\def\End{\mathrm{End}}
\def\eend{\mathrm{end}}
\def\gra{\mathrm{g}}
\newcommand{\Sy}{\mathbb{S}}
\def\dcGra{\mathsf{dsGra}}
\def\3ldcncGra{3\textsf{-}\mathsf{dsncGra}}
\newcommand{\ZZ}{\mathbb{Z}}
\newcommand{\ac}{\scriptstyle \text{\rm !`}}
\def\G{\mathfrak{G}}
\def\g{\mathfrak{g}}
\def\a{\mathfrak{a}}
\def\Hom{\mathrm{Hom}}
\def\cc{\circledcirc}
\def\MC{\mathrm{MC}}
\def\pap#1#2#3{#1\stackrel{#2}{\vcenter{\hbox{\text{\scalebox{1.2}{$\Join$}}}}}#3}
\def\ba{\bar{\alpha}}
\def\Gm{\mathcal{G}}
\def\H{\mathrm{H}}
\newcommand{\Frob}{\mathrm{Frob}}
\newcommand{\uFrob}{\mathrm{uFrob}}
\def\U{\mathcal{U}}
\def\II{
	\vcenter{\hbox{\begin{tikzpicture}[scale=0.1]
	\draw[thick] (0,0) -- (0,3);
	\end{tikzpicture}}}}
\def\BB{
	\vcenter{\hbox{\begin{tikzpicture}[scale=0.1]
	\draw[thick] (0,4) -- (2,2);
	\draw[thick] (4,4) -- (2,2);
	\draw[thick] (2,2) -- (2,0);
	\end{tikzpicture}}}}
\def\CC{
	\vcenter{\hbox{\begin{tikzpicture}[baseline=1.8ex,scale=0.1]
	\draw[thick] (2,4) -- (2,2);
	\draw[thick] (4,0) -- (2,2);
	\draw[thick] (2,2) -- (0,0);
	\end{tikzpicture}}}}
\def\DD{
	\vcenter{\hbox{\begin{tikzpicture}[baseline=1.8ex,scale=0.1]
	\draw[thick] (4,0) -- (0,4);
	\draw[thick] (4,4) -- (0,0);
	\end{tikzpicture}}}}
\def\EE{
	\vcenter{\hbox{\begin{tikzpicture}[baseline=1.8ex,scale=0.1]
	\draw[thick] (2,4) -- (2,2);
	\draw[thick] (4,0) -- (2,2);
	\draw[thick] (2,2) -- (0,0) -- (2,-2);
	\draw[thick] (2,-2) -- (4,0) -- (2,2);
	\draw[thick] (2,-2) -- (2,-4);	
	\end{tikzpicture}}}}
\def\FF{
	\vcenter{\hbox{\begin{tikzpicture}[scale=0.1]
	\draw[thick] (0,4) -- (2,2);
	\draw[thick] (4,4) -- (2,2) -- (2,4);
	\draw[thick] (2,2) -- (2,0);
	\end{tikzpicture}}}}
\def\GG{
	\vcenter{\hbox{\begin{tikzpicture}[baseline=1.8ex,scale=0.1]
	\draw[thick] (2,4) -- (2,2);
	\draw[thick] (4,0) -- (2,2);
	\draw[thick] (2,2) -- (0,0);
	\draw[thick] (2,2) -- (2,0);	
	\end{tikzpicture}}}}
\numberwithin{equation}{section}
\theoremstyle{plain}
\newtheorem{proposition}[equation]{Proposition}
\newtheorem{theorem}[equation]{Theorem}
\newtheorem{corollary}[equation]{Corollary}
\theoremstyle{definition}
\newtheorem{definition}[equation]{Definition}
\newtheorem{remark}[equation]{\sc Remark}
\title{Universal constructions in homotopical algebra}
\date{\today}
\author{Ricardo Campos}
\address{Universit\'e de Toulouse, Institut de Math\'ematiques de Toulouse, CNRS, UMR 5219,  Toulouse, France}
\email{ricardo.campos@math.univ-toulouse.fr}
\author{Bruno Vallette}
\address{Universit\'e Sorbonne Paris Nord, Laboratoire de G\'eom\'etrie, Analyse et Applications, CNRS, UMR 7539, Villetaneuse, France}
\email{vallette@math.univ-paris13.fr}
\keywords{Deformation theory, homotopy algebras, operads, properads, Lie theory, gauge group}
\thanks{2020 \emph{Mathematics Subject Classification.}
Primary 18M85; Secondary  14D15, 16T10, 18M70, 22E65.
\newline
The authors are supported by ANR-20-CE40-0016 HighAGT}
\begin{document}

\maketitle

\begin{abstract}
We apply the effective integration theory of Lie-graph algebras, developed recently by the authors, 
to the deformation and homotopy theories of types of  bialgebras, that is structures controlled by a properad, like associative bialgebras, (involutive) Lie bialgebras, Frobenius bialgebras, double Poisson bialgebras, pre-Calabi--Yau algebras, quantum Airy structures, etc. 
In these cases, we provide their associated Deligne groupoid with an explicit homotopical description. 
We settle the Koszul hierarchy and the twisting procedure on the properadic level. 
We also give a conceptual construction of the homotopy transfer theorem in terms of gauge actions.
This work extends the formulas for the deformation theory of operadic algebras. 
\end{abstract}

\tableofcontents

\section*{Introduction}

\paragraph*{\bf Integration theory of Lie-graph algebras} 
The integration theory of complete Lie algebras into topological groups is performed via the intricate Baker--Campbell--Hausdorff (BCH) formula, see \cite{BF12}. When the Lie bracket comes from the skew-symmetrisation of a binary product, this latter one is called \emph{Lie-admissible}. Pre-Lie products are Lie-admissible operations satisfying an even stronger relation which allows one to consider exponential and logarithm isomorphisms mapping the Baker--Campbell--Hausdorff formula to a simpler and effective group structure. In the same way, the action of this \emph{gauge group} on Maurer--Cartan elements admits more efficient formulas in the pre-Lie case \cite{DotsenkoShadrinVallette16}. This integration theory for pre-Lie algebras admits powerful applications to the deformation theory of morphisms of operads, including homotopy algebra structures \cite{campos2019lie, DotsenkoShadrinVallette22, Emprin24}. 

\medskip 

We introduced in \cite{CV25I} an intermediate Lie-admissible algebraic structure bridging Lie algebras and pre-Lie algebras, termed \emph{Lie-graph algebras}. This structure comprises an infinite family of operations indexed by directed connected graphs
The leading example for such a new algebraic structure is the deformation complex of morphisms of properads. This includes types of bialgebras  that are structures equipped with operations having multiple inputs and multiple outputs. Such rich structures appear more and more nowadays in algebra, geometry, topology, and mathematical physics:
associative bialgebras, (involutive) Lie bialgebras \cite{CFL15}, (involutive) Frobenius bialgebras \cite{Yalin18}, 
double Poisson bialgebras \cite{VdB08, Leray19protoII, LV22}, pre-Calabi--Yau (bi)algebras \cite{TZ07Bis, KTV25, KTV23}, (balanced) infinitesimal bialgebras \cite{Aguiar00, Q23}, quantum Airy structures \cite{KS18, ABCO17, BV24}, etc. 
The integration theory of Lie-graph algebras is fully settled in \cite{CV25I}: graphical exponential and logarithms isomorphisms, effective gauge group, and effective action on Maurer--Cartan elements. 
The goal of the present paper is to apply these new fundamental results to  universal constructions of homotopy bialgebras. 

\medskip

{
\paragraph*{\bf Deligne groupoid} 
The largest range of homotopy bialgebras for which all the required nice properties 
($\infty$-morphisms, homotopy transfer theorem, etc.)
have been established so far are the ones given  by differential graded properads which are the cobar construction of a coproperad: $\mathcal P_\infty = \Omega \C$~. 
The deformation complex $\g_{\C,A}$ of gebra structures over a properad of this form on a chain complex $A$ admits a natural Lie-graph algebra structure \cite[Theorem~1.21]{CV25I}.
Applying the aforementioned general integration theory of Lie-graph algebras to this example, we are first able to 
describe explicitly the associated Deligne groupoid, which is made up of Maurer--Cartan elements related under the gauge group action.

\medskip

\textbf{Theorem \ref{thm:DeligneGroupoid}.}
\emph{
The Deligne groupoid associated to the Lie-graph algebra $\g_{\C,A}$ is given by $\Omega \C$-gebra structures on $A$ and $\infty$-morphisms with first component equal to the identity of $A$: 
\[
\mathrm{Deligne}(\g_{\C,A})\cong \left(\Omega \C\textrm{-}\,\mathrm{gebras}, \infty\textrm{-}\,\mathrm{isotopies}\right)\ .
\]
}


\paragraph*{\bf Universal constructions} 
In the operadic context, there exist three universal constructions of homotopy algebras: the homotopy transfer theorem \cite[Section~10.3]{LodayVallette12}, the Koszul hierarchy \cite{Markl15}, and the twisting procedure \cite{DotsenkoShadrinVallette22}. 
The first one, the \emph{homotopy transfer theorem} provides us with explicit formulas for the transport of homotopy gebra structures through homotopy equivalences without any loss of algebraic-homotopical data. 
It was recently extended to the level of properads in \cite{HLV19}. 
Generalising the operadic approach of \cite{DotsenkoShadrinVallette16}, we give here the homotopy transfer theorem for $\P_\infty$-gebras a conceptual interpretation in terms of the gauge group action (\cref{thm:HTT}), which emphasizes the perturbative aspect of this theory. 

\medskip

\paragraph*{\bf The Koszul hierarchy} 
In \cref{sec:hierarchy},  we generalise to properads the construction known as the \emph{Koszul hierarchy}, which originally is a method that 
produces a (shifted) $\mathrm{L}_\infty$-algebra measuring the failure of a square-zero operator to be a derivation of a product in a given commutative algebra \cite{Koszul85}. 
This construction is intimately related to the Koszul duality $\mathrm{Com}\text{-}\mathrm{Lie}$. 
Using the gauge group action, we show that a similar procedure can be applied to any quadratic properad $\P$ to construct a 
$\P_\infty$-gebra structure from the data of a $\P^!$-gebra structure, where $\P^!$ is the Koszul dual properad. 
In order to appreciate the range of novelty brought by this new fundamental construction, one has to keep in mind that, contrary to the operadic case, there exists \emph{no} bar-cobar construction between $\P$-gebras and $\P^!$-gebras for quadratic properads, for the simple reason that the notion of a free $\P$-gebra does not exist. So the present Koszul hierarchy provides us with the first universal construction of (homotopy) $\P$-gebras from $\P^!$-gebras. 
To illustrate this phenomenon and to show how effective this new theory is, we make explicit
the homotopy involutive Lie bialgebra structures obtained from Frobenius bialgebra structures in this way (\cref{thm:KoszulHierarchy}). 
In order to do so, we introduce a general method which can be used to compute efficiently any case. For the first time, this establishes a universal construction of \emph{symplectic field theories} from \emph{topological field theories}.

\medskip

\textbf{Theorem \ref{thm:KoszulHierarchy}.}
\emph{
The Koszul hierarchy associated to a chain complex $(A, \d)$ and a Frobenius bialgebra structure $(A, \mu, \Delta)$ is the shifted homotopy involutive Lie bialgebra structure given by 
\begin{align*}\label{eq:hierarchy}
\nu_{m,n}^g=&
\sum_{\substack{
g_1+g_2=g \\
k+l=n \\ 
\sigma\in \mathsf{Sh}(k, l)}} 
A_{m,l}^{g_2}\, 
\left(\left(\mu_{m, l+1}^{g_2} \circ_1^1 \d\right) \circ_1^1 \mu_{1, k}^{g_1}\right) \cdot \sigma\\
&+ 
\sum_{\substack{
k+l=n \\ 
\sigma\in \mathsf{Sh}(k, l)\\
\tau \in \mathsf{Sh}(1, m-1) }} 
(-1)^{k-1}A_{m-1,l}^{g}\, 
\tau^{-1}\cdot \left(\left(\mu_{m-1, l+1}^{g} \circ_1^1 \d\right) \circ_1^2 \mu_{2, k}^{0}\right) \cdot \sigma
\\ 
&+
C_{m,n}^{g}\, 
\left(\mu_{m, g+1}^{0} \circ_1^1 \d\right) \circ_{[g+1]}^{[g+1]} \mu_{g+1, n}^{0}~,
\end{align*}
for $n,m \geqslant 1$, $g\geqslant 0$, and $m+n+g \geqslant 3$, where the notation $\mathsf{Sh}(k, l)$ stands for the subset of $\Sy_n$ made up of the $(k, l)-$shuffles and where the last term satisfies $C^0_{m,n}=0$~. The coefficients $A$ and $C$ have explicit formulas.
}

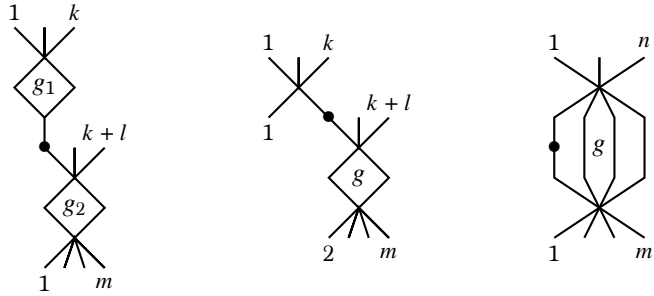
\begin{figure*}[h]
\begin{align*}
\vcenter{\hbox{\begin{tikzpicture}[scale=0.4]
	\draw[thick] (0,0)--(0,1);
	\draw[thick] (0,1)--(-1, 2)--(0,3)--(1,2)--cycle;
	\draw[thick] (-1,4)--(0,3)--(0,4)--(0,3)--(1,4);
	\draw[thick] (0,0)--(1,-1)--(1,0)--(1,-1)--(2,0);		
	\draw[thick] (1,-1)--(2,-2)--(1,-3)--(0,-2)--cycle;
	\draw[thick] (0,-4)--(1,-3)--(2,-4)--(1,-3)--(0.66,-4)--(1,-3)--(1.33,-4);
	\draw (0,0) node {{$\bullet$}} ; 	
	\draw (0,2) node {\scalebox{0.9}{$g_1$}} ; 	
	\draw (1,-2) node {\scalebox{0.9}{$g_2$}} ; 	
	\draw (-1,4.5) node {{\scalebox{0.8}{$1$}}} ; 				
	\draw (1,4.5) node {{\scalebox{0.8}{$k$}}} ; 					
	\draw (2,0.5) node {{\scalebox{0.8}{$k+l$}}} ; 						
	\draw (0,-4.5) node {{\scalebox{0.8}{$1$}}} ; 							
	\draw (2,-4.5) node {{\scalebox{0.8}{$m$}}} ; 								
\end{tikzpicture}}}
\qquad \qquad
\vcenter{\hbox{\begin{tikzpicture}[scale=0.4]
	\draw[thick] (0,0)--(-1,1)--(-2,0);
	\draw[thick] (-2,2)--(-1,1)--(-1,2)--(-1,1)--(0,2);
	\draw[thick] (0,0)--(1,-1)--(1,0)--(1,-1)--(2,0);		
	\draw[thick] (1,-1)--(2,-2)--(1,-3)--(0,-2)--cycle;
	\draw[thick] (0,-4)--(1,-3)--(2,-4)--(1,-3)--(0.66,-4)--(1,-3)--(1.33,-4);
	\draw (0,0) node {{$\bullet$}} ; 		
	\draw (1,-2) node {\scalebox{0.9}{$g$}} ; 	
	\draw (-2,2.5) node {{\scalebox{0.8}{$1$}}} ; 				
	\draw (0,2.5) node {{\scalebox{0.8}{$k$}}} ; 					
	\draw (2,0.5) node {{\scalebox{0.8}{$k+l$}}} ; 						
	\draw (-2,-0.5) node {{\scalebox{0.8}{$1$}}} ; 							
	\draw (0,-4.5) node {{\scalebox{0.8}{$2$}}} ; 							
	\draw (2,-4.5) node {{\scalebox{0.8}{$m$}}} ; 								
\end{tikzpicture}}}
\qquad \qquad
\vcenter{\hbox{\begin{tikzpicture}[scale=0.4]
	\draw[thick] (0,3)--(3,1)--(3,-1)--(0,-3);
	\draw[thick] (3,3)--(0,1)--(0,-1)--(3,-3);		
	\draw[thick] (1.5,3)--(1.5,2)--(1,1)--(1,-1)--(1.5,-2)--(1,-3);			
	\draw[thick] (1.5,3)--(1.5,2)--(2,1)--(2,-1)--(1.5,-2)--(2,-3);				
	\draw (0,0) node {{$\bullet$}} ; 	
	\draw (0,3.5) node {{\scalebox{0.8}{$1$}}} ; 				
	\draw (3,3.5) node {{\scalebox{0.8}{$n$}}} ; 					
	\draw (0,-3.5) node {{\scalebox{0.8}{$1$}}} ; 							
	\draw (3,-3.5) node {{\scalebox{0.8}{$m$}}} ; 	
	\draw (1.5,0) node {\scalebox{0.9}{$g$}} ; 									
\end{tikzpicture}}}
\end{align*}
\caption*{{\sc Figure.} \, The three types of composites appearing in the Koszul hierarchy of a Frobenius bialgebra.}
\end{figure*}

\medskip

\paragraph*{\bf The twisting procedure} 
In \cref{sec:twisting},  we generalise to the properadic level the \emph{twisting procedure}, which originally 
takes in the data of an $\mathrm{L}_\infty$-algebra and one of its Maurer--Cartan element and 
produces another $\mathrm{L}_\infty$-algebra structure, see \cite{DotsenkoShadrinVallette22}. Interpreted in terms of the Maurer--Cartan moduli space, this procedure amounts to localise at the given Maurer--Cartan element. From a given quadratic properad $\P$, we explain how to enlarge the deformation Lie-graph algebra 
$\g_{\P^!,A}$ controlling $\P_\infty$-gebras in order to produce a bigger gauge group including this time the elements of $A$. 
By definition, the gauge action of these elements is not stable on $\P_\infty$-gebras, but produces in general \emph{curved} $\P_\infty$-gebras. We coin the relevant general definition of \emph{$\P_\infty$-Maurer--Cartan equations} (\cref{def:PinftyMCequation}), which characterise the elements that produce back a $\P_\infty$-gebra under their gauge group action. As usual with the present approach, this procedure is effective and receives the following formula.

\medskip

\textbf{Theorem \ref{thm:TwProp}.}
\emph{
For any $\P_\infty$-gebra structure $\alpha$ on a complete graded module $A$, 
the gauge group action 
 \[(\1 -x)\cdot \alpha = \alpha \lhd(\1 +x)
 \] 
of a $\P_\infty$-Maurer--Cartan element $x$ produces a 
 $\P_\infty$-gebra structure~.
}

\medskip

This fundamental new form of the twisting procedure recovers all the known cases on particular algebras and bialgebra structures, like 
homotopy Lie bialgebras \cite{MW15}, homotopy involutive Lie bialgebras \cite{CFL15}, and pre-Calabi--Yau algebras \cite{LV22} for instance. Like the Koszul hierarchy, it is settled in such a way that it admits many degrees of freedom for a wider range of applications. For instance, it is shown that this twisting procedure can now accommodate not only the action of elements but also tensors of elements and more. 

\medskip

\paragraph*{\bf Range of applications} 
The fundamental results developed in the present paper actually apply to a wide range of recently studied properadic structures, some of them already mentioned above: 
associative bialgebras (using the bar construction for the cooperad $\C$ in order to get a model for their homotopy version), (involutive) Lie bialgebras, (involutive) Frobenius bialgebras, 
double Poisson bialgebras, pre-Calabi--Yau (bi)algebras, (balanced) infinitesimal bialgebras, and quantum Airy structures. 
This is why we expect that the present universal constructions will find even more applications in these domains in the future. 

\medskip

\paragraph*{\bf Conventions} 
For a uniform and light presentation, we work over a ground field $\k$ of characteristic $0$ though many results hold under weaker hypotheses. 
The underlying category is that of differential $\ZZ$-graded $\k$-vector spaces, using the homological degree convention, for which differentials have degree $-1$.  
The symmetric groups are denoted by $\Sy_n$.
We use the conventions of \cite{MerkulovVallette09I, MerkulovVallette09II, HLV19, CV25I} for  properads: for instance, we draw graphs with inputs at the top and outputs at the bottom. All the graphs present here will be directed by a global flow going from top to bottom.

\medskip

\paragraph*{\bf Acknowledgements} We would like to express our appreciation to 
Ga\"etan Borot, Fr\'ed\'eric Chapoton, Vladimir Dotsenko, Coline Emprin, Johan Leray, and Thomas Willwacher 
for interesting and helpful discussions. The second author is grateful to the IH\'ES for the long term invitation and the excellent working conditions. 

\section{Deligne groupoid of homotopy gebras}

Algebraic structures made up of operations with multiple inputs and multiple outputs can be encoded by the notion of a properad, which admits a Koszul duality theory \cite{Vallette07}. Bialgebras over properads $\Omega \C$ which are given by the cobar construction of a coproperad $\C$ have nice deformation and homotopic properties. 
First, they admit a convolution differential graded Lie algebra $\g_{\C, A}$ whose Maurer--Cartan elements are in one-to-one correspondence with $\Omega \C$-gebra structures on $A$, see \cite{MerkulovVallette09I, MerkulovVallette09II}.
Generalising the operadic case, a higher notion of \emph{$\infty$-morphism} was introduced recently in \cite[Section~3.3]{HLV19} for such homotopical properadic structures: $\infty$-endomorphisms are made up of a collection of maps which are packed into a map $\C \to \End_A$ satisfying some relation, see \cite[Proposition~3.16]{HLV19}. 
The \emph{$\infty$-isotopies} are the $\infty$-endomorphisms whose first component $\I \to \End_A$ is given by the identity $\id_A$~. 

\begin{proposition}\label{prop:MCInftyIso}
For any  conilpotent dg coproperad $\C$, two $\Omega \C$-gebra structures are gauge equivalent if and only if they are  $\infty$-isotopic. 
\end{proposition}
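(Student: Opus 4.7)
The plan is to leverage the effective graphical integration theory of the Lie-graph algebra $\g_{\C,A}$ established in \cite{CV25I} and match it, on the nose, with the characterisation of $\infty$-isotopies from \cite{HLV19}. Both sides of the desired equivalence are parametrised by the same underlying set (elements of $\g_{\C,A}$ of degree $0$, respectively maps $\C \to \End_A$ with prescribed identity component); the point is to verify that the group structure and its action on Maurer--Cartan elements coincide under a natural bijection.

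First, I recall from \cite{CV25I} that the effective gauge group of $\g_{\C,A}$ consists of elements of the form $\1 + x$ with $x \in \g_{\C,A}^{0}$, and that both the group law and the action on Maurer--Cartan elements admit closed graphical formulas built from the graph-level composition inherited from the cocomposition of $\C$. On the $\infty$-morphism side, \cite[Proposition~3.16]{HLV19} describes an $\infty$-endomorphism of $(A, \alpha)$ as a map $f: \C \to \End_A$ satisfying a precise compatibility encoded by $\alpha$, and an $\infty$-isotopy is singled out by $f|_{\I} = \id_A$. Writing any such $f$ as $\id_A + x$ according to the splitting $\C = \I \oplus \overline{\C}$ yields a canonical bijection between $\infty$-isotopies and gauge elements $\1 + x$.

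Next, I would verify that this bijection turns the composition of $\infty$-isotopies into the group law of the gauge group: the composition of two maps $\C \to \End_A$ is defined using the coproperad structure on $\C$ together with the connected composition product on $\End_A$, which are exactly the ingredients assembled into the effective group law of \cite{CV25I}. For the equivalence of the two relations, the forward direction then reads: if $\alpha$ and $\alpha'$ are Maurer--Cartan elements related by the gauge action of $\1 + x$, then $\id_A + x$ is an $\infty$-isotopy from $(A,\alpha)$ to $(A,\alpha')$. This amounts to checking that the identity $\alpha' = (\1 + x)\cdot \alpha$ coming from the effective action formula of \cite{CV25I} is, under the above bijection, the same equation as the compatibility relation of \cite[Proposition~3.16]{HLV19}. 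The converse is immediate by reading the same equation backwards: any $\infty$-isotopy from $\alpha$ to $\alpha'$ yields, via the same identification, a gauge element taking $\alpha$ to $\alpha'$.

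The main difficulty is dictionary translation: the compatibility relation defining an $\infty$-morphism is phrased in terms of the coproperad cocomposition of $\C$ applied piecewise to each syllable, while the gauge-action formula of \cite{CV25I} is a sum over connected graphs produced by iterated graph-compositions in $\g_{\C,A}$. Once the correspondence between these two graphical presentations is spelled out carefully, with the correct book-keeping of signs, the role of $\I$, and the restriction to connected graphs enforced by the Lie-graph framework, the proposition follows by dualising the corresponding operadic statement, in the spirit of \cite{DotsenkoShadrinVallette16}.
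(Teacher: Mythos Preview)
Your plan is exactly the paper's strategy, but what you have written is a plan rather than a proof: the single substantive step---showing that the gauge action equation of \cite[Theorem~2.29]{CV25I} coincides with the $\infty$-isotopy equation of \cite[Proposition~3.16]{HLV19}---is asserted (``this amounts to checking\ldots'') and then deferred (``once the correspondence\ldots is spelled out carefully''). That checking \emph{is} the proposition.

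The paper carries it out concretely, and the manipulation is short once you use the right tools. Start from the gauge action formula, write $\exp(\lambda)=\1+f$, and observe via formula~$(\ast)$ from the proof of \cite[Theorem~2.6]{CV25I} that
\[
\pap{(\1+f)}{\ba}{(\1+f)^{-1}}=\left((\1+f)\cc(\1+\ba)\cc(\1+f)^{-1}\right)_{-1}
\]
and similarly for the differential term, using that $f$ has degree $0$ and $\ba$ has degree $-1$. This puts everything in terms of the associative product $\cc$ on $\a_{\C,A}$. One then right-multiplies both sides by $(\1+f)$---this is where the equivalence, rather than a one-way implication, comes from---to obtain
\[
\bar{\beta}\lhd(\1+f)=(\1+f)\rhd\ba-\partial(\1+f)\ ,
\]
which is precisely the $\infty$-isotopy condition of \cite[Proposition~3.16]{HLV19}. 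No further ``graphical bookkeeping'' or sign chase is needed beyond this.

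Two minor remarks. First, matching the group laws (composition of $\infty$-isotopies versus the gauge product) is not required for the proposition as stated; it is relevant only for the subsequent groupoid isomorphism. Second, the closing phrase ``by dualising the corresponding operadic statement'' is misleading: nothing is dualised, and the argument is a direct computation in the Lie-graph convolution algebra; the operadic case is merely analogous.
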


\begin{proof}
Recall from \cite[Proposition~1.27]{CV25I} that the dg Lie-admissible convolution algebra $\g_{\C, A}$ extends to a complete dg Lie-graph algebra satisfying $\g_{\mathcal{C}, A}= 
	\F_1 \g_{\mathcal{C}, A}$~, to which we can apply \cite[Theorem~2.29]{CV25I}. 
So two Maurer--Cartan elements $\ba, \bar{\beta} \in \MC\left(\g_{\C, A}\right)$ are gauge equivalent if and only if there exists $\lambda \in \Hom_{\Sy}\left(\oC, \End_A\right)_0$ such that 
\begin{equation*}
\bar{\beta}=
\pap{\exp(\lambda)}{\ba}{\exp(-\lambda)}-\left(\exp(\lambda); \d \big(\exp(\lambda)\big)\right)\cc \exp(-\lambda)~,
\end{equation*}
in $\g_{\C, A}$~. 
If we simply denote $\exp(\lambda)=\1+f$, with $f\in \Hom_{\Sy}\left(\oC, \End_A\right)_0$~, this relation becomes
\begin{equation*}
\bar{\beta}=
\pap{(\1 +f)}{\ba}{(\1 +f)^{-1}}-
\left((\1 +f); \partial(\1+f) \right)\cc (\1 +f)^{-1}
~, 
\end{equation*}

in $\g_{\C,A}$~.
The formula~$(\ast)$ given in the proof of \cite[Theorem~2.6]{CV25I} shows that 
\begin{equation*}
\pap{(\1 +f)}{\ba}{(\1 +f)^{-1}}=\left((\1 +f)\cc (\1+\ba)\cc (\1 +f)^{-1}\right)_{-1}~
\end{equation*}
since $f$ has degree $0$ and since $\ba$ has degree $-1$; using the same idea, one can also notice that 
\begin{equation*}
\left((\1 +f); \partial(\1+f) \right)\cc (\1 +f)^{-1}=\left(\big(\1 +f+\partial(\1+ f)\big)\cc (\1 +f)^{-1}\right)_{-1}~.
\end{equation*}
To summarize, we get 
\begin{equation*}
(\1+\bar{\beta})_{-1}=
\left((\1 +f)\cc (\1+\ba)\cc (\1 +f)^{-1}\right)_{-1}-\left(\big(\1 +f+\partial(\1 + f)\big)\cc (\1 +f)^{-1}\right)_{-1}~, 
\end{equation*}
in $\a_{\C,A}$~, which is equivalent to 
\begin{align*}
\left((\1+\bar{\beta})\cc (\1 +f)\right)_{-1}=&
\left((\1 +f)\cc (\1+\ba)\cc (\1 +f)^{-1}\cc (\1 +f)\right)_{-1}\\&
-\left(\big(\1 +f+\partial(\1+ f)\big)\cc (\1 +f)^{-1}\cc (\1 +f)\right)_{-1}~.
\end{align*}
The direct implication is straightforward, the converse is obtained similarly by composing on the right-hand side by 
$(\1 +f)^{-1}$~. 
This last equation is actually equal to 
\begin{equation*}
\bar{\beta}\lhd (\1 +f)=
(\1 +f)\rhd \ba-\partial(\1 +f)~, 
\end{equation*}
which is the equation in $\g_{\C,A}$ of an $\infty$-isotopy $\1+f$ between the two $\Omega \C$-gebra structures 
$\bar \alpha, \bar \beta \in \MC\left(\g_{\C, A}\right)$  by \cite[Proposition~3.16]{HLV19}. 
\end{proof}

\begin{theorem}\label{thm:DeligneGroupoid}
For any conilpotent dg coproperad $\C$
and any dg module $A$, the Deligne groupoid associated to the complete dg Lie algebra $\g_{\C, A}$ is isomorphic to the groupoid of 
$\Omega \C$-gebra structures on $A$ with their $\infty$-isotopies:
\[
\mathrm{Deligne}(\g_{\C,A})\cong \left(\Omega \C\textrm{-}\,\mathrm{gebras}, \infty\textrm{-}\,\mathrm{isotopies}\right)\ .
\]
\end{theorem}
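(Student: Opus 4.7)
My plan is to construct the isomorphism of groupoids level by level: first on objects, then on morphism sets, and finally to verify functoriality (identity and composition). The core analytic content lies in \cref{prop:MCInftyIso}, and what remains is mostly to keep track of the explicit bijections underlying the gauge and $\infty$-isotopy groups.

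On objects, I would invoke the classical correspondence between Maurer--Cartan elements of the convolution dg Lie algebra $\g_{\C,A}$ and $\Omega\C$-gebra structures on $A$, established in \cite{MerkulovVallette09I, MerkulovVallette09II} via the cobar--convolution adjunction; this provides a bijection between the object sets of the two groupoids.

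On morphisms, the argument is essentially a refinement of \cref{prop:MCInftyIso}. Its proof already does more than show an equivalence relation: it exhibits an \emph{explicit} bijective correspondence between a gauge group element $\exp(\lambda)=\1+f$, with $f\in\Hom_{\Sy}(\oC,\End_A)_0$, and an $\infty$-isotopy $\1+f$ between the corresponding $\Omega\C$-gebra structures $\bar\alpha$ and $\bar\beta$. Concretely, the final equation
\[
\bar\beta\lhd(\1+f)=(\1+f)\rhd\bar\alpha-\partial(\1+f)
\]
obtained in that proof is, by \cite[Proposition~3.16]{HLV19}, the defining equation of an $\infty$-isotopy; so the assignment $\exp(\lambda)\mapsto\1+f$ is a bijection between the Hom-sets $\mathrm{Hom}_{\mathrm{Deligne}(\g_{\C,A})}(\bar\alpha,\bar\beta)$ and the set of $\infty$-isotopies from $\bar\alpha$ to $\bar\beta$.

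To upgrade this into an isomorphism of groupoids, I would check the two functoriality conditions. The identity is immediate: the neutral gauge element $\exp(0)=\1$ corresponds to $f=0$, i.e.~to the identity $\infty$-isotopy $\1$. For composition, the Lie-graph algebra framework of \cite{CV25I} is tailor-made: the gauge group multiplication, which in the general dg Lie setting would be given by the BCH formula, acquires under the isomorphism $\lambda\mapsto\exp(\lambda)=\1+f$ the effective form
\[
(\1+f_1)\cdot(\1+f_2)=(\1+f_1)\cc(\1+f_2)~,
\]
as encoded in formula $(\ast)$ of \cite[Theorem~2.6]{CV25I} and the surrounding discussion of the effective gauge group in \cite[Section~2]{CV25I}. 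Since composition of $\infty$-isotopies in the sense of \cite[Section~3]{HLV19} is likewise given by the convolution product $\cc$ on $\Hom_\Sy(\oC,\End_A)$, these two group laws agree, and the correspondence is functorial.

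The main obstacle I anticipate is the last step: pinning down that the abstract gauge group product in $\g_{\C,A}$, viewed as a dg Lie algebra via BCH, really does coincide with the $\cc$-product of $\infty$-isotopies, and doing so with the correct direction of composition and the correct sign conventions. This is exactly the kind of bookkeeping where the Lie-graph upgrade of \cite{CV25I} pays off, since it replaces the opaque BCH series by an effective convolutional formula, reducing the check to an inspection of the formula $(\ast)$ of \cite[Theorem~2.6]{CV25I}.
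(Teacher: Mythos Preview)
Your proposal is correct and rests on the same key ingredient as the paper, namely \cref{prop:MCInftyIso}; the paper's own proof is in fact the single sentence ``This is a direct corollary of \cref{prop:MCInftyIso}.'' You have unpacked what that corollary entails---the bijection on objects, the bijection on Hom-sets, and the compatibility with identities and composition via the effective gauge product of \cite{CV25I}---which the paper leaves implicit, so your write-up is if anything more complete.
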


\begin{proof}
This is a direct corollary of \cref{prop:MCInftyIso}.
\end{proof}

This general result admits many applications, for instance one for each Koszul properad $\P$, see \cite{Vallette07}: in this case, one considers the Koszul dual coproperad $\C=\P^{\ac}$. It shows that the Deligne groupoid associated to the convolution dg Lie algebra 
$\g_{\P^{\ac},A}$ encoding homotopy $\P$-gebra structures on $A$ is isomorphic to the groupoid of $\infty$-isotopies. 
For more details, we refer the reader 
to \cite[Section~6.1]{HLV19} for the case of homotopy Lie bialgebras, 
to \cite[Section~6.4]{HLV19} for the case of homotopy involutive Lie bialgebras (also known as $\mathrm{IBL}_\infty$-gebras), 
to \cite[Section~4]{LV22} and to \cite{KTV23} for the cases of  homotopy double Poisson gebras, $\mathrm{V}_\infty$-gebras, pre-Calabi--Yau algebras, 
and 
to \cite[Section~5]{Q23} for the case of homotopy balanced infinitesimal bialgebras. 
Quantum Airy structures introduced by Kontsevich--Soibelman in \cite{KS18}, see also \cite{ABCO17}, govern algebraically the 
Eynard--Orantin topological recursion \cite{EO07,EO09}. They are shown in \cite{BV24} to be encoded by a Koszul properad, so 
\cref{thm:DeligneGroupoid} applies to this case and describes the Deligne groupoid of their homotopical generalisation. 
The \emph{partition function }that computes the solutions to many problems in the enumerative geometry of complex curves and in low-dimensional quantum field theories  \cite{EICM,Ebook,B20}, 
is proved in \cite{BV24} to be given by a certain gauge element acting on homotopy 
quantum Airy structures 
 by the action formula of \cite[Theorem~2.29]{CV25I}. 
 Finally, considering the bar-cobar resolution, that is $\C=\mathrm{B}\,  \mathrm{Bialg}$, one can also treat the case of homotopy associative bialgebras. 
 
\begin{corollary}\label{cor:DeligneGroupoidHomotopyEquiv}
For any conilpotent dg coproperad $\C$
and any dg module $A$, the Deligne groupoid associated to the complete dg Lie algebra $\g_{\C, A}$ is isomorphic to the groupoid made up of morphisms of dg properads from $\Omega \C$ to $\End_A$ and homotopies between them:
\[
\mathrm{Deligne}(\g_{\C,A})\cong \left(\Hom_{\mathrm{dg\ properad}}\left(\Omega \C, \End_A\right), \sim_h\right)\ .
\]
\end{corollary}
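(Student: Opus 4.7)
The plan is to deduce the corollary from \cref{thm:DeligneGroupoid} by reinterpreting both the objects and the morphisms of the groupoid $\left(\Omega\C\textrm{-}\mathrm{gebras}, \infty\textrm{-}\mathrm{isotopies}\right)$ in purely properadic terms.

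For the objects, the identification is tautological: by definition, an $\Omega\C$-gebra structure on a dg module $A$ is the same datum as a morphism of dg properads $\Omega\C\to \End_A$. This is precisely the content of the equivalence between the set of Maurer--Cartan elements of $\g_{\C,A}$ and $\Hom_{\mathrm{dg\ properad}}\left(\Omega\C, \End_A\right)$ recalled at the beginning of \cref{sec:Deligne-section} (from \cite{MerkulovVallette09I, MerkulovVallette09II}); no further work is required at the level of $0$-cells.

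For the morphisms, the task is to match $\infty$-isotopies with homotopies of morphisms of dg properads. First I would recall that $\Omega\C$ is a cofibrant replacement of its homology properad, so the relevant notion of homotopy between two maps $\Omega\C\to \End_A$ can be computed via a cylinder (or path) object. I would then invoke the interpretation of $\infty$-morphisms of $\Omega\C$-gebras developed in \cite[Section~3.3]{HLV19}, which already identifies $\infty$-morphisms with the appropriate cylinder-type morphisms. Restricting to first component equal to $\id_A$ (i.e.\ to $\infty$-isotopies) corresponds exactly to restricting to those cylinder morphisms that agree with the two fixed endpoint maps $\Omega\C\to \End_A$ on the boundary, which is precisely the definition of a left homotopy $\sim_h$ between the two properad morphisms.

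The main obstacle I expect is the last step: properly setting up the dictionary between $\infty$-isotopies (viewed as elements $\1+f \in \Hom_{\Sy}(\oC,\End_A)_0$ satisfying the equation derived in the proof of \cref{prop:MCInftyIso}) and an explicit cylinder object for $\Omega\C$ in the model category of dg properads. Once this cylinder is built, or equivalently once one invokes the results of \cite{HLV19} identifying $\infty$-isotopies with $\Omega\C\to \End_A \otimes \Omega^\bullet(\Delta^1)$-type morphisms agreeing on the boundary, the isomorphism of groupoids follows formally by combining this with \cref{thm:DeligneGroupoid}, since both the objects and the morphisms match on the nose.
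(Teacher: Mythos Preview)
Your overall strategy matches the paper's: reduce to \cref{thm:DeligneGroupoid} and then identify $\infty$-isotopies with left homotopies of dg properad maps via a cylinder object for $\Omega\C$. The place where your proposal remains incomplete is exactly the step you flag as ``the main obstacle'', and you do not actually close it.

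The paper fills this gap by invoking a concrete cylinder object: working in the model structure on dg properads of \cite[Appendix~A]{MerkulovVallette09II}, a cylinder $\mathrm{Cyl}(\Omega\C)$ is constructed in \cite[Proposition~1.12]{HLV24} and is shown there to be precisely the dg properad whose algebras are pairs of $\Omega\C$-gebra structures together with an $\infty$-isotopy between them. With that input, the identification of $\infty$-isotopies with homotopies is immediate, and \cref{thm:DeligneGroupoid} concludes. Two points about your sketch: first, \cite[Section~3.3]{HLV19} develops $\infty$-morphisms but does not supply the cylinder-object interpretation you attribute to it; that is the content of the later paper \cite{HLV24}. Second, your alternative via a path object $\End_A\otimes\Omega^\bullet(\Delta^1)$ is a genuinely different route and would require additional work not available in \cite{HLV19}: you would need to verify that this is a path object for $\End_A$ in dg properads and then argue directly that maps $\Omega\C\to \End_A\otimes\Omega^\bullet(\Delta^1)$ with fixed endpoints coincide with $\infty$-isotopies. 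Neither step is trivial, and neither is carried out in your proposal.
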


\begin{proof}
We work in the model category structure on dg properads given in \cite[Appendix~A]{MerkulovVallette09II}. This defines the notion of a homotopy between morphisms of dg properads. 
For conilpotent dg coproperads $\C$, a cylinder object $\mathrm{Cyl}(\Omega \C)$ of the cobar construction $\Omega \C$ has been given in \cite[Proposition~1.12]{HLV24}, where it has been shown to coincide with the dg properad encoding two 
$\Omega \C$-gebra structures related by an $\infty$-isotopy.  \cref{thm:DeligneGroupoid} applies to conclude the proof. 

\end{proof}

\begin{remark}
This result extends \cite[Corollary~2]{DotsenkoShadrinVallette16} from the operad level to the properad level.
\end{remark}

\section{Homotopy transfer theorem}\label{sec:HTT}
The homotopy transfer theorem for gebras of type $\Omega \C$ has recently been settled in \cite[Section~4]{HLV19} with explicit formulas. 
The integration theory of Lie-graph algebras applied to the convolution algebra allows us to give the following interpretation to the homotopy transfer theorem in terms of the deformation gauge group action. 

\medskip

Recall that a \emph{contraction} of a chain complex $(A, d_A)$ is an endomorphism satisfying 
$h^2=0$ and $hdh=h$~, see \cite{EilenbergMacLane53}. 
Such a datum is equivalent to a deformation retract satisfying the side conditions \cite{BarnesLambe91}:
  \[
\begin{tikzcd}
(A,d_A)\arrow[r, shift left, "p"] \arrow[loop left, distance=1.5em,, "h"] & (H,d_H) \arrow[l, shift left, "i"]
\end{tikzcd} \ ,
\]
\begin{align*}
pi=\id_H\ , \quad ip -\id_A = d_A h+hd_A\ , \quad 
hi=0\ , \quad ph=0\ , \quad \text{and} \quad h^2=0 \ .
\end{align*}
For any positive integer $n$, we consider the following symmetric homotopies 
$$ h_n:=\frac{1}{n!}\sum_{\sigma \in \Sy_n}  \sum_{k=1}^n \left(
\id^{\otimes (k-1)} \otimes h \otimes \pi^{\otimes (n-k)}
\right)^\sigma$$
from $(ip)^{\otimes n}$ to $\id^{\otimes n}$. We denote generically by $\H$ the collection of homotopies $\{h_n\}_{n\geqslant 1}$.

\medskip
Given an $\Omega \C$-gebra structure on $A$, i.e. a Maurer--Cartan element $\ba$ in the convolution algebra $\g_{\C, A}$~, we consider the following elements of $\g_{\C, A}$~:
\[ \Upsilon \ \colon \ 
\oC\xrightarrow{\Delta_{\oC}} \Gm^c\big(\oC\big) \xrightarrow{\Gm^c(s\ba)}
\Gm^c\big(s\End_A\big) \xrightarrow{\mathrm{lev}}
\Gm_{\mathrm{lev}}^c\big(s\End_A\big) \xrightarrow{\id\mathrm{H\id}} \End_A\ ,\]
where $\Gm^c_{\mathrm{lev}}$ stands for the endofunctor on $\Sy$-bimodules made up levelled directed graphs \cite[Definition~4.2]{HLV19}, where $\mathrm{lev}$ is the levelisation morphism which amounts to putting one vertex on each level \cite[Definition~4.3]{HLV19}, and where the map ``$\id\mathrm{H}\id$'' amounts to removing all the suspensions $s$ and to labelling all the levels by the homotopy $\mathrm{H}$ except for the top level  and the bottom level which are labelled by the identity. It gives rise to the following two canonical elements of the deformation gauge group $\G$: 
\[
\Phi\coloneq \1 +\H\Upsilon \quad  \text{and} \quad \Psi\coloneq \1 + \Upsilon\H\ ,
\]
where left-hand  (resp. right-hand)  $\H$ means that we compose (resp. pre-compose) with $\H$, or equivalently we label the bottom (resp. top) level of $\Upsilon$ by $\H$. 

\begin{theorem}\label{thm:HTT}\leavevmode
\begin{enumerate}
\item The following two elements 
\[ \hat{\alpha}\coloneq \Phi^{-1}\cdot\bar{\alpha} \quad \text{and} \quad    \check{\alpha}\coloneq \Psi\cdot \bar{\alpha}\]
define $\Omega\C$-gebra structures on $A$.

\item The homotopy transferred $\Omega \C$-gebra structure $\bar\beta\coloneq p\Upsilon i$ on $H$~,
where the bottom level of $\Upsilon$ is labelled by $p$ and the top level by $i$~,  is equal to 
\[\bar \beta=p{\hat{\alpha}}i=p{\check{\alpha}}i \ .\]

\item The top and bottom squares of the following diagram of $\Omega \C$-gebras with $\infty$-morphisms are commutative: 
\[\begin{tikzcd}
\check{\alpha} \ar[r, "p"]         &      p\check{\alpha}i  \ar[d, equal]\ \\
\bar\alpha  \ar[u, rightsquigarrow, "\Psi"]    \ar[r, rightsquigarrow, "p_\infty",shift left=0.5ex]    &  \ar[l, rightsquigarrow, "i_\infty", shift left=0.5ex]   \bar\beta \ar[d, equal] \  \\
\hat{\alpha} \ar[u, rightsquigarrow, "\Phi"]         &    \ar[l, "i"]    p\hat{\alpha}i \ .
\end{tikzcd}\]
\end{enumerate}
\end{theorem}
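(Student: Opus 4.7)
The plan is to deduce each of the three items from the integration theory of Lie-graph algebras (CV25I, Theorem~2.29) applied to the complete Lie-graph algebra $\g_{\C,A}$, together with the explicit HTT formula of \cite[Section~4]{HLV19}.

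For item~(1), both $\Phi = \1 + \H\Upsilon$ and $\Psi = \1 + \Upsilon\H$ belong to the deformation gauge group $\G$ of $\g_{\C,A}$, since they are of the form $\1$ plus a degree-zero element of the complete filtered piece (their ``body'' $\Upsilon$ comes from applying $\mathrm{id}\,\H\,\mathrm{id}$ to an element built out of $\bar\alpha$ under the decomposition and levelisation maps). Because $\bar\alpha \in \MC(\g_{\C,A})$ and the gauge group acts on Maurer--Cartan elements, the elements $\hat\alpha = \Phi^{-1}\cdot\bar\alpha$ and $\check\alpha = \Psi\cdot\bar\alpha$ are again Maurer--Cartan, hence encode $\Omega\C$-gebra structures on $A$ by the correspondence of \cite{MerkulovVallette09I}.

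For item~(2), I would start from the gauge action formula established in the proof of Proposition~\ref{prop:MCInftyIso}, namely
\[
\check\alpha \lhd \Psi = \Psi \rhd \bar\alpha - \partial\Psi \ ,
\]
and its analogue for $\hat\alpha$. Composing on the top by $i$ and on the bottom by $p$, the side conditions $ph = 0$, $hi = 0$, and $h^2 = 0$ force the massive cancellation of all terms containing an $\H$ at the top level (killed by $i$), at the bottom level (killed by $p$), or two consecutive $\H$ labels; the $\partial\Psi$ contribution vanishes for the same reasons after sandwiching. What survives is exactly the sum over directed levelled graphs with $i$ decorating the leaves, the structure maps of $\bar\alpha$ (read from $\Delta_{\oC}$ and $\Gm^c(s\bar\alpha)$) at the vertices, the homotopies $\H$ at each internal level, and $p$ decorating the root. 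This is the explicit HTT formula of \cite[Section~4]{HLV19} for $\bar\beta = p\Upsilon i$, which yields the desired equality $p\check\alpha i = \bar\beta = p\hat\alpha i$.

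For item~(3), I would invoke Proposition~\ref{prop:MCInftyIso}: the gauge equivalences realised by $\Phi$ and $\Psi$ are $\infty$-isotopies, giving the two vertical arrows. The $\infty$-quasi-isomorphisms $i_\infty$ and $p_\infty$ of \cite[Section~4]{HLV19} admit an explicit description as levelled-graph sums which, in our gauge-theoretic repackaging, factor respectively as $\Psi$ followed by the strict morphism $p$ (for $p_\infty$) and as $\Phi^{-1}$ preceded by the strict morphism $i$ (for $i_\infty$); commutativity of the top and bottom squares then reduces to the already established identities $p\check\alpha i = \bar\beta = p\hat\alpha i$ combined with $pi=\id_H$.

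The main obstacle is step~(2): translating the expansion of the gauge action, written in terms of the Lie-graph operations $\lhd$, $\rhd$, $\cc$ and the levelisation procedure of CV25I, into the combinatorial sum over levelled directed graphs used in HLV19. Both sides are sums over decorated graphs with prescribed labels on leaves, vertices, internal levels, and root, and one must match not only the shapes but also the symmetry coefficients coming from the coproperad decomposition $\Delta_{\oC}$ and from the symmetrisation of $\H = \{h_n\}$. Once this dictionary is set up, the side-condition cancellations reduce the surviving terms to exactly the HTT formula, and the rest of the statement follows formally.
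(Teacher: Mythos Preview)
Your treatment of items~(1) and~(3) is essentially the paper's: item~(1) is a direct invocation of \cite[Theorem~2.29]{CV25I}, and for item~(3) the paper also argues via the factorisation $p_\infty = p\circledcirc\Psi$ (and dually $i_\infty=\Phi\circledcirc i$), deducing that $p$ and $i$ are genuine $\infty$-morphisms as composites $p_\infty\circledcirc\Psi^{-1}$ and $\Phi^{-1}\circledcirc i_\infty$. Your extra mention of $pi=\id_H$ is not what drives the commutativity of the squares; the squares \emph{are} the factorisations.

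For item~(2) there is a real gap. You start from the $\infty$-isotopy equation $\check\alpha\lhd\Psi=\Psi\rhd\bar\alpha-\partial\Psi$ and sandwich by $p$ and $i$, invoking the side conditions for ``massive cancellation''. The left-hand side indeed collapses to $p\check\alpha i$ via $\H i=0$. But on the right-hand side two things go wrong. First, $p(\Psi\rhd\bar\alpha)i$ is not obviously $\bar\beta$: the bottom level of $\Psi=\1+\Upsilon\H$ is the identity, so $p$ does not kill anything there, and you still need the non-trivial combinatorial identification with the HTT sum (the paper imports this from \cite[Lemma~4.13]{HLV19}, which you do not cite). Second, and more seriously, ``$\partial\Psi$ vanishes for the same reasons after sandwiching'' is not justified: once the differential hits the top $\H$ of $\Upsilon\H$, there is no $h$ left at the top for $hi=0$ to act on.

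The paper avoids both difficulties by working instead with the explicit gauge-action formula for $\hat\alpha$ and exploiting a structural fact about the \emph{inverse}: by the inverse formula of \cite{CV25I}, every non-identity term of $\Phi^{-1}$ carries at least one $h$ on an output, so the side condition $ph=0$ yields the clean identity $p\Phi^{-1}=p$. This single stroke reduces $p\hat\alpha i$ to $p(\bar\alpha\lhd\Phi)i$ minus $p\big(\d(\Phi^{-1})\lhd\Phi\big)i$; the latter vanishes because $p\Phi^{-1}=p$ is supported only on $\I$ and hence $p\,\d(\Phi^{-1})$ is zero on $\oC$, while the former is identified with $\bar\beta$ via \cite[Lemma~4.13]{HLV19}. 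The ``dictionary'' you flag as the main obstacle is thus not established term-by-term but imported wholesale from that lemma; the genuinely new ingredient here is the inverse-structure argument giving $p\Phi^{-1}=p$, which your proposal does not contain.
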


\begin{proof}\leavevmode
\begin{enumerate}
\item This is a direct corollary of \cite[Theorem~2.29]{CV25I}.

\item {Recall first that $\bar\beta=p \Upsilon i$ was proved to be a $\Omega \C$-gebra structure on $H$ in \cite[Theorem~4.14]{HLV19}. Since it is made up of sums of levelled graphs with one vertex at each level, one can consider the bottom vertex of the given levelization each time.  Above it lies a globally levelled graph, possibly disconnected, with levels labelled by $\H$ and the top one by $i$. Lemma~4.13 of \cite{HLV19} shows that the same result is produced by $\bar\beta=p(\ba \lhd \Phi)i$~. Let us now prove the same formula for $p{\hat{\alpha}}i$. 
In $\g_{\C,A}$, the notation $p{\hat{\alpha}}i$ means that we first apply the map $\hat{\alpha}$ to $\oC$ in order to get elements of 
$\End_A$ that we compose with $p$ at each output and that we pullback by $i$ at each input. Under this notational convention, we get 
\[p{\hat{\alpha}}i= 
p\left(\pap{\Phi^{-1}}{\ba}{\Phi}\right)i
-p\left(\left(\Phi^{-1}; \d \left(\Phi^{-1}\right)\right)\cc \Phi\right)i\ ,\]
by \cref{thm:Action}. 
Proposition~\ref{prop:Inverse} shows that  the inverse element $\Phi^{-1}$ is equal $\1$ plus a sum of terms having at least one $h$ labelling one output. So the side condition $ph=0$ implies $p\Phi^{-1}=p$ and thus
\[p{\hat{\alpha}}i= 
p\left(\ba\lhd \Phi \right)i
-p\left(\d \left(\Phi^{-1}\right) \lhd \Phi\right)i\ .\]
Recall that 
$\d \left(\Phi^{-1}\right)=\partial_A \circ\Phi^{-1}- \Phi^{-1}\circ\d_\C$ by definition
and that $\d (\1)=0$ since $\d_\C(\I)=0$ and $\partial_A(\id_A)=0$~. 
Since $p$ is a chain map, we get 
\[p\left(\d \left(\Phi^{-1}\right)\right)=\partial_A \circ\left(p\Phi^{-1}\right)- \left(p\Phi^{-1}\right)\circ\d_\C~,\]
whose evaluation on $\oC$
 vanishes since it is the case for $p\Phi^{-1}=p$~.
In the end, we get $\bar\beta=p(\ba \lhd \Phi)i=p \hat{\alpha} i$, which concludes the proof. The other equality is proven in a similar way. 
}

\item The formulae for $\infty$-morphisms given in \cite[Sections~3-4]{HLV19} show that the composite of $\Psi$ with $p$ is equal to $p_\infty=p\circledcirc\Psi$ in $\a_{\C,A}$~. By \cref{prop:MCInftyIso} and by \cite[Theorem~4.14]{HLV19}, we know that $\Psi$ and $p_\infty$ are $\infty$-morphisms. Since $\Psi$ is an $\infty$-isotopy, we get 
$p_\infty\circledcirc \Psi^{-1}=p$ in $\a_{\C,A}$~ by \cite[Theorem~3.22]{HLV19}. We conclude that $p$ is an $\infty$-morphism by the fact that the composite of $\infty$-morphisms is an $\infty$-morphism. The arguments for $i$ and $i_\infty$ are similar. 
\end{enumerate}
\end{proof}

\begin{remark}
Contrary to the operadic case of \cite[Section~8]{DotsenkoShadrinVallette16}, we do not know so far whether the ``perturbed'' $\Omega \C$-gebra structures $\check{\alpha}$ and  $\hat{\alpha}$  are already restricted and corestricted to $H$. 
\end{remark}

\section{Koszul hierarchy}\label{sec:hierarchy} The main obstruction to extend the operadic results to the properadic level is the absence of a free $\P$-gebra functor. As a consequence, we have (so far) no bar-cobar adjunction between $\P$-gebras and $\P^!$-gebras, which in the Koszul duality case $\mathrm{Com}^!\cong \mathrm{sLie}$ provides us with the functors of rational homotopy theory of Quillen \cite{Quillen69} and Sullivan \cite{Sullivan77} for instance.
An important but different construction between algebras over Koszul dual operads is given by the \emph{Koszul hierarchy}. 
Originally due to J.-L. Koszul \cite{Koszul85} and developed by M. Markl in \cite{Markl15,Markl14}, the Koszul hierarchy is a method that takes as input a (graded) commutative algebra $A$ together with a (degree 1) square-zero operator $\Delta$, and constructs a (shifted) $\mathrm{L}_\infty$-algebra on $A$ such that the $n$-bracket vanishes if and only if $\Delta$ is a differential operator of order $n-1$.
The Koszul hierarchy was interpreted conceptually with the pre-Lie deformation theory in \cite[Section~5]{DotsenkoShadrinVallette16} and thus extended to operads beyond the cases of commutative and associative algebras. 
We now develop it further to the properadic level using the Lie-graph gauge action settled in the previous sections. This introduces for the first time a universal way to construct  $\P_\infty$-gebra structures  from $\P^!$-gebra structures, on the same underlying space.

\medskip

Let $\P$ be a quadratic properad and let $(A, \d)$ be a chain complex viewed as an abelian $\P$-gebra structure.
Notice that the presence of the differential prevents it from being completely trivial. 
Let $\theta : \P^! \to \End_A$ be a $\P^!$-gebra structure on $A$. 
Suppose that we have a (iso)morphism of $\Sy$-bimodules 
\[\mathrm{K} : {\P}^{\ac} \to  \P^!\]
from the Koszul dual coproperad to the Koszul dual properad, which sends the counit to the unit, for instance via a chosen basis. 
Such a data gives rise to the following element of the deformation gauge group associated to the convolution algebra $\g_{{\mathcal P^{\ac}}, A}$:
\[\Theta\coloneq  \theta(\mathrm{K}) \in  \G_{{\mathcal P^{\ac}}, A}\ .\]

\begin{definition}[Koszul hierarchy]
The \emph{Koszul hierarchy} associated to the data $(A\, , \d\, , \theta\, , \mathrm{K})$ is the $\P_\infty$-gebra structure on the chain complex $(A, \d)$ given by the action of $\Theta^{-1}$ on $\d$:
\[\Theta^{-1} \cdot \d =  -\left(\Theta^{-1}; \d\left( \Theta^{-1} \right)\right)\cc\Theta=
\Theta^{-1} \cc \left(\Theta; \d \Theta \right) \in \MC\left(\g_{{\mathcal P^{\ac}}, A}\right)\ . \]
\end{definition}

The Koszul hierarchy provides us with a well-defined $\P_\infty$-gebra structure by \cite[Theorem~2.29]{CV25I}.

\begin{remark}
Notice that the Koszul hierarchy retains almost no \emph{homotopical} information since it produces a $\P_\infty$-gebra structure which is gauge trivial by definition.
Equivalently, the Koszul hierarchy is $\infty$-isotopic to the underlying chain complex with trivial $\P$-structure 
by \cref{thm:DeligneGroupoid}. 
\end{remark}

\medskip

Let us make explicit the following noteworthy new example: let $\P=\mathrm{sIBiLie}$ be the properad encoding shifted involutive Lie bialgebras, that is with bracket and cobracket both of degree $-1$, see \cite{CFL15}. 
Its Koszul dual properad $\P^!\cong\Frob$ is the properad encoding Frobenius bialgebras by \cite{Vallette07} and both properads  are Koszul by \cite{CMW16}.  The properad 
$\P^!\cong\Frob$ is one-dimensional in each arity and genus; we denote its canonical basis by 
$\left\{p_{m,n}^g\right\}_{m,n \geqslant 1, g\geqslant 0}$~. The composite of 
$p_{m_1,n_1}^{g_1}$ with $p_{m_2,n_2}^{g_2}$
along $k$ edges 
is equal to \[p_{m_1+m_2-k,n_1+n_2-k}^{g_1+g_2+k-1}~,\]
 see \cite[Proposition~6.7]{HLV19}. The linear dual basis 
$\left\{c_{m,n}^g\coloneq \left(p_{m,n}^g\right)^*\right\}_{m,n \geqslant 1, g\geqslant 0}$ provides us with a basis for  the coproperad $\P^{\ac}\cong\Frob^*$~. 
So the associated convolution algebra is isomorphic to 
\[\g_{{\mathcal \Frob^*}, A}\cong 
\prod_{\substack{m,n \geqslant 1 \\ g\geqslant 0 \\ m+n+g \geqslant 3}} \Hom\left(A^{\odot n}, A^{\odot m}\right) \hbar^g~, \]
where $A^{\odot n}\coloneq A^{\otimes n}/\Sy_n$ is the symmetric tensor product, 
with the Lie-admissible product given by 
\[f_2\hbar^{g_2}\star f_1\hbar^{g_1}=\sum_{k=1}^{\mathrm{min}(m_1,n_2)}
\sum_{\substack{I\subseteq [m_1]\\ J \subseteq [n_2] \\ |I|=|J|=k}} \left(f_1 \circ_J^I f_2\right) \hbar^{g_1+g_2+k-1}~.\]

\medskip 

We consider here the canonical isomorphism of $\Sy$-bimodules 
$\mathrm{K} : \Frob^* \to  \Frob$ that send each basis element $c_{m,n}^g$ to its linear dual $p_{m,n}^{g}$~. 
Let  $(A, \mu, \Delta)$ be a Frobenius bialgebra structure, where 
$\mu \colon A^{\odot 2} \to A$ is an associative commutative product and where $\Delta \colon A \to A^{\odot 2}$ is a compatible 
coassociative cocommutative coproduct. The induced element $\theta(\mathrm{K})$ of the deformation gauge group is given by 
\begin{center}
\begin{tikzcd}[column sep=large]
p_{m,n}^g \arrow[r,"\theta"]  &  \mu_{m,n}^g\coloneq \Delta^{m-1}\circ \mu^g \circ \Delta^g \circ \mu^{n-1}~, \\
c_{m,n}^g  \arrow[u,"\mathrm{K}"] \arrow[ur,"\Theta"'] & 
\end{tikzcd}
\end{center}
where $\mu^k$ stands for the composite of $k$ products $\mu$ and where 
$\Delta^k$ stands for the composite of $k$ coproducts $\Delta$. By convention $\mu^0=\Delta^0=\id_A$~.

\medskip

Under the isomorphism 
\[\G_{\Frob^*, A}\cong \prod_{\substack{n,m \geqslant 1 \\  g\geqslant 0}} \Hom\left(A^{\odot n}, A^{\odot m}\right) \hbar^g~,\] 
this element of the deformation gauge group is equal to 
\[\Theta=
\sum_{\substack{m,n \geqslant 1 \\  g\geqslant 0}}  \mu_{m,n}^g \, \hbar^g=
\id_A+\mu+\Delta+ \mu^2+\Delta^2+ \Delta \mu+\mu\Delta\hbar+\cdots=
\II+ \BB + \CC +\FF+\GG+ \DD+\EE\,\hbar+\cdots
\ .\]

\begin{theorem}\label{thm:KoszulHierarchy}
Let $(A, \d)$ be a chain complex whose underling graded module is equipped with a Frobenius bialgebra structure $(A, \mu, \Delta)$. The shifted homotopy involutive Lie bialgebra structure 
\[\nu \coloneq 
\Theta^{-1} \cdot \d
\in \MC\left(\g_{{\mathcal \Frob^*}, A}\right)\]
obtained by the Koszul hierarchy associated to these data is equal to 
\begin{align*}\label{eq:hierarchy}
\nu_{m,n}^g=&
\sum_{\substack{
g_1+g_2=g \\
k+l=n \\ 
\sigma\in \mathsf{Sh}(k, l)}} 
A_{m,l}^{g_2}\, 
\left(\left(\mu_{m, l+1}^{g_2} \circ_1^1 \d\right) \circ_1^1 \mu_{1, k}^{g_1}\right) \cdot \sigma\\
&+ 
\sum_{\substack{
k+l=n \\ 
\sigma\in \mathsf{Sh}(k, l)\\
\tau \in \mathsf{Sh}(1, m-1) }} 
(-1)^{k-1}A_{m-1,l}^{g}\, 
\tau^{-1}\cdot \left(\left(\mu_{m-1, l+1}^{g} \circ_1^1 \d\right) \circ_1^2 \mu_{2, k}^{0}\right) \cdot \sigma
\\ 
&+
C_{m,n}^{g}\, 
\left(\mu_{m, g+1}^{0} \circ_1^1 \d\right) \circ_{[g+1]}^{[g+1]} \mu_{g+1, n}^{0}~,
\end{align*}
for $n,m \geqslant 1$, $g\geqslant 0$, and $m+n+g \geqslant 3$, where the notation $\mathsf{Sh}(k, l)$ stands for the subset of $\Sy_n$ made up of the $(k, l)-$shuffles and where the last term satisfies $C^0_{m,n}=0$~. The coefficients $A$ are given by 
\begin{align*}
A^{0}_{1,0}=1 \qquad \text{and} \qquad 
A^g_{m,n}=\sum_{k=1}^{m+n+2g-1} (-1)^k
\sum_{\Xi_k(m,n,g)} \prod_{j=2}^k \mathrm{T}(p_j, |I_j|, g_j, p_{j-1})~, 
 \end{align*}
for any $m\geqslant 1$, $n\geqslant 0$ and $g\geqslant 0$ with $m+n+2g\geqslant 2$~, 
where the sums run over the sets $\Xi_k(m,n,g)$ made up of the partitions $I_1 \sqcup I_2 \sqcup \cdots \sqcup I_k=\{2, \ldots, n\}$, the integer partitions $g_1+g_2+\cdots+g_k=g$, and the integers $p_1, \ldots, p_{k-1}$ such that 
$1\leqslant p_1 \leqslant p_2+g_2 \leqslant p_3 + g_2+g_3 \leqslant p_{k-1}+g_2+\cdots+g_{k-1}\leqslant 
m+g_2+\cdots+g_k$, satisfying $p_{j}-p_{j-1}+|I_j|+2g_j\geqslant 1$, for any $1\leqslant j\leqslant k$, under the convention $p_0=1$ and $p_k=m$, and 
where \[
\mathrm{T}(m, i, g,p)\coloneq\sum_{k=\max(1, p-g)}^p \mathrm{S}(m,k)k^i \binom{p-1}{k-1}\binom{g-p+2k-1}{k-1}~,
\]
with $\mathrm{S}(m,k)$ the Stirling number of second kind.
The coefficients $C$ are given by 
\begin{align*}
C^{1}_{1,1}=-1 \qquad \text{and} \qquad 
C^g_{m,n}= 
\sum_{k=1}^{m+n+2g-4} (-1)^{k+1}
\sum_{\Xi_k(m,n,g)} \left(n+(-1)^{p_1+|I_1|}\right)\prod_{j=2}^k \mathrm{T}(p_j, |I_j|, g_j, p_{j-1})~,
\end{align*}
for any $m,n,g\geqslant 1$ with $m+n+2g\geqslant 2$.
\end{theorem}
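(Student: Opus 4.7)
The plan is to compute the gauge action $\Theta^{-1} \cdot \d$ directly from the formula $\nu = \Theta^{-1} \cc (\Theta; \d \Theta)$ recalled in the definition of the Koszul hierarchy, by expanding each factor in the basis $\{c^g_{m,n}\}$ of $\Frob^*$. First I would expand $\Theta^{-1}$ via the alternating inversion formula (of the type used in Proposition~\ref{prop:Inverse}). Since $\Theta = \1 + \sum_{(m,n,g) \neq (1,1,0)} \mu_{m,n}^g\, \hbar^g$, its inverse is a signed sum of iterated Lie-admissible products of the $\mu_{m,n}^g$. The composition rule of the Frobenius generators (composing along $k$ edges produces $\mu_{m_1+m_2-k,\, n_1+n_2-k}^{g_1+g_2+k-1}$) then guarantees that each such iterated product is again a single $\mu_{M,N}^G$, weighted by a combinatorial multiplicity recording the number of distinct length-$k$ chains realising it.

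Next I would compute $\d \Theta$. Since the coproperad $\Frob^*$ has trivial internal differential and $(A,\d)$ is only assumed to be a chain complex, this reduces to the sum of commutators $[\d, \mu_{m,n}^g]$, each of which places a single $\d$-vertex on one of the internal edges or legs of the tree-like composite $\mu_{m,n}^g = \Delta^{m-1}\mu^g \Delta^g \mu^{n-1}$. I would then carry out the convolution product $\Theta^{-1} \cc (\Theta; \d\Theta)$ and project onto the coefficient of $c^g_{m,n}$. Every resulting term contains exactly one $\d$-vertex, and a case analysis on the position of this vertex in the composite graph shows that only three topological families can occur, matching precisely the three drawings of the figure: the $\d$-vertex sits either on a single edge above the lower vertex, or on a single edge above a $\mu_{2,k}^0$-vertex which itself sits above the lower vertex, or on the bundle of $g+1$ edges connecting two genus-$0$ vertices.

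The main obstacle will be the bookkeeping of the scalar coefficients $A^g_{m,n}$ and $C^g_{m,n}$. For each topological family, many terms of the expansion of $\Theta^{-1} \cc (\Theta; \d\Theta)$ contribute, indexed by a chain length $k$ (from the inverse expansion, carrying the alternating sign $(-1)^k$), a set partition $I_1 \sqcup \cdots \sqcup I_k$ of the remaining inputs, an integer partition $g_1 + \cdots + g_k = g$ of the genus, and a monotone sequence $p_1 \leqslant \cdots \leqslant p_{k-1}$ recording the number of strands reconnected at each composition step. These are exactly the data parametrising $\Xi_k(m,n,g)$. At each step the combinatorial weight decomposes into a Stirling factor $\mathrm{S}(m,k)\,k^i$ counting the distributions of the $i$ new inputs among the $k$ current blocks, a binomial $\binom{p-1}{k-1}$ for the choice of reconnected outputs, and a binomial $\binom{g-p+2k-1}{k-1}$ counting compatible genus distributions; these assemble into the factor $\mathrm{T}(p, i, g, p')$.

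Finally, the base cases $A^0_{1,0} = 1$ and $C^1_{1,1} = -1$ are checked directly from $\Theta^{-1} \cdot \d$ in the lowest arities; the extra sign $(-1)^{k-1}$ in the second family originates from the shuffle $\tau \in \mathsf{Sh}(1, m-1)$ that places the distinguished output of the top $\mu_{2,k}^0$ at position $1$ among the $m$ outputs; and the vanishing $C^0_{m,n} = 0$ is immediate since a connecting bundle of $g+1$ edges with $g = 0$ degenerates into a single edge, which already falls into the first family rather than the third.
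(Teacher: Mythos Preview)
Your approach is genuinely different from the paper's, and the main idea---expanding $\Theta^{-1}\cc(\Theta;\d\Theta)$ directly---is in principle legitimate, but the sketch hides the actual difficulty rather than addressing it.

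The paper does \emph{not} expand $\Theta^{-1}$. Instead it observes that $\nu=\Theta^{-1}\cdot\d$ is the \emph{unique} degree $-1$ element satisfying the implicit equation $\partial\Theta=\Theta\rhd\nu$, postulates an ansatz $\nu=\alpha+\beta+\gamma$ built from the three graph types with undetermined scalar coefficients $A^{g}_{m,l}$ and $C^{g}_{m,n}$, and then plugs this ansatz into $\Theta\rhd\nu$. The result splits into five explicit graph types (I)--(V); matching each against $\partial\Theta$ yields a triangular system of recursions in the weight $m+n+2g$, which is then solved in closed form. The combinatorial factor $\mathrm{T}(m,i,g,p)$ enters with a precise meaning: it counts the ways of grafting a level of $\Theta$-operations onto $p$ outputs so as to add $m$ outputs, $i$ inputs, and $g$ to the genus; this is what produces the Stirling and binomial factors. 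Several of the resulting identities (e.g.\ the vanishing of the auxiliary quantity $D(m,n,g)$ for $(n,g)\neq(0,0)$) are proved by explicit sign-reversing involutions on levelled graphs.

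Your direct route has two gaps. First, the assertion that ``only three topological families can occur'' after expanding $\Theta^{-1}\cc(\Theta;\d\Theta)$ is exactly the nontrivial content of the theorem: a priori the expansion produces arbitrary directed-graph composites of many Frobenius blocks with one $\d$ inserted, and the reduction via the Frobenius relations to the three canonical shapes is where the combinatorics lives. In the paper this step is bypassed by making the three types an \emph{ansatz} and verifying it against the implicit equation. Second, your identification of the coefficients with the $\Xi_k$-sums and the $\mathrm{T}$-factors is asserted rather than derived: you say the weights ``assemble into'' $\mathrm{T}(p,i,g,p')$, but this requires exactly the counting argument the paper carries out, together with the inductive solution of the recursion. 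Without the implicit equation you would also need a separate argument for the cancellations that force the simplified recursion for $C^g_{m,n}$ (the paper's computation of $Q(m,n,g)$). In short, your outline names the right objects but does not supply the mechanism that actually pins down the coefficients; the paper's indirect route via $\partial\Theta=\Theta\rhd\nu$ is what makes the computation tractable.
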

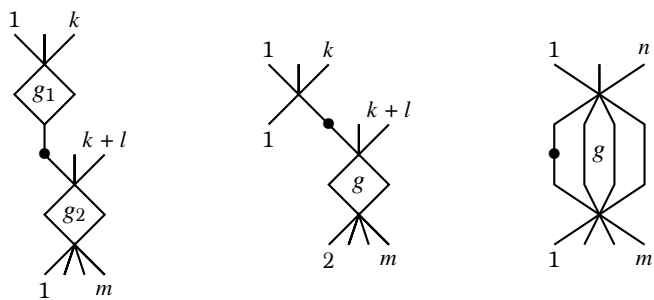
\begin{figure*}[h]
\begin{align*}
\vcenter{\hbox{\begin{tikzpicture}[scale=0.4]
	\draw[thick] (0,0)--(0,1);
	\draw[thick] (0,1)--(-1, 2)--(0,3)--(1,2)--cycle;
	\draw[thick] (-1,4)--(0,3)--(0,4)--(0,3)--(1,4);
	\draw[thick] (0,0)--(1,-1)--(1,0)--(1,-1)--(2,0);		
	\draw[thick] (1,-1)--(2,-2)--(1,-3)--(0,-2)--cycle;
	\draw[thick] (0,-4)--(1,-3)--(2,-4)--(1,-3)--(0.66,-4)--(1,-3)--(1.33,-4);
	\draw (0,0) node {{$\bullet$}} ; 	
	\draw (0,2) node {\scalebox{0.9}{$g_1$}} ; 	
	\draw (1,-2) node {\scalebox{0.9}{$g_2$}} ; 	
	\draw (-1,4.5) node {{\scalebox{0.8}{$1$}}} ; 				
	\draw (1,4.5) node {{\scalebox{0.8}{$k$}}} ; 					
	\draw (2,0.5) node {{\scalebox{0.8}{$k+l$}}} ; 						
	\draw (0,-4.5) node {{\scalebox{0.8}{$1$}}} ; 							
	\draw (2,-4.5) node {{\scalebox{0.8}{$m$}}} ; 								
\end{tikzpicture}}}
\qquad \qquad
\vcenter{\hbox{\begin{tikzpicture}[scale=0.4]
	\draw[thick] (0,0)--(-1,1)--(-2,0);
	\draw[thick] (-2,2)--(-1,1)--(-1,2)--(-1,1)--(0,2);
	\draw[thick] (0,0)--(1,-1)--(1,0)--(1,-1)--(2,0);		
	\draw[thick] (1,-1)--(2,-2)--(1,-3)--(0,-2)--cycle;
	\draw[thick] (0,-4)--(1,-3)--(2,-4)--(1,-3)--(0.66,-4)--(1,-3)--(1.33,-4);
	\draw (0,0) node {{$\bullet$}} ; 		
	\draw (1,-2) node {\scalebox{0.9}{$g$}} ; 	
	\draw (-2,2.5) node {{\scalebox{0.8}{$1$}}} ; 				
	\draw (0,2.5) node {{\scalebox{0.8}{$k$}}} ; 					
	\draw (2,0.5) node {{\scalebox{0.8}{$k+l$}}} ; 						
	\draw (-2,-0.5) node {{\scalebox{0.8}{$1$}}} ; 							
	\draw (0,-4.5) node {{\scalebox{0.8}{$2$}}} ; 							
	\draw (2,-4.5) node {{\scalebox{0.8}{$m$}}} ; 								
\end{tikzpicture}}}
\qquad \qquad
\vcenter{\hbox{\begin{tikzpicture}[scale=0.4]
	\draw[thick] (0,3)--(3,1)--(3,-1)--(0,-3);
	\draw[thick] (3,3)--(0,1)--(0,-1)--(3,-3);		
	\draw[thick] (1.5,3)--(1.5,2)--(1,1)--(1,-1)--(1.5,-2)--(1,-3);			
	\draw[thick] (1.5,3)--(1.5,2)--(2,1)--(2,-1)--(1.5,-2)--(2,-3);				
	\draw (0,0) node {{$\bullet$}} ; 	
	\draw (0,3.5) node {{\scalebox{0.8}{$1$}}} ; 				
	\draw (3,3.5) node {{\scalebox{0.8}{$n$}}} ; 					
	\draw (0,-3.5) node {{\scalebox{0.8}{$1$}}} ; 							
	\draw (3,-3.5) node {{\scalebox{0.8}{$m$}}} ; 	
	\draw (1.5,0) node {\scalebox{0.9}{$g$}} ; 									
\end{tikzpicture}}}
\end{align*}
\caption{The three types of composites A,B, and C appearing in the Koszul hierarchy of a Frobenius bialgebra.}
		\label{Fig:ABC}
	\end{figure*}

\begin{proof}
First, we notice that the Koszul hierarchy $\nu=\Theta^{-1}\cdot \d$ is the unique degree $-1$ element in $\g_{{\mathcal \Frob^*}, A}$ satisfying 
\begin{equation}\label{eq:CRUCIAL}\tag{$\divideontimes$}
\partial \Theta = \Theta \rhd \nu~.
\end{equation}
 Indeed \cref{prop:MCInftyIso} shows that $\Theta$ defines an $\infty$-isotopy from the abelian 
shifted homotopy involutive Lie bialgebra structure $(A,\d)$ to $\nu$ and so it satisfies Equation~\eqref{eq:CRUCIAL}. In the other way round, we use the fact that the deformation gauge group acts on the set of degree $-1$ elements of $\g_{{\mathcal \Frob^*}, A}$ under the same formula given by \cite[Theorem~2.29]{CV25I}. 
The proof of \cref{prop:MCInftyIso} shows that Equation~\eqref{eq:CRUCIAL} is equivalent to 
$\Theta\cdot \nu = \d$~, for any degree $-1$ element $\nu$ of $\g_{{\mathcal \Frob^*}, A}$~. Therefore, we get 
$\nu =\Theta^{-1}\cdot \d$, which is a Maurer--Cartan element since $\d$ is. 

\medskip

Let us call respectively by A and C the first of the third types of composites that appear in the formula of $\nu$, 
see \cref{Fig:ABC} where the bullet represents the differential $\d$. The second type of composites belongs to the following more general type B: 
\[
\vcenter{\hbox{\begin{tikzpicture}[scale=0.4]
	\draw[thick] (0,0)--(-1,1)--(-2,0)--(-1,1)--(-0.66,0)--(-1,1)--(-1.33,0);
	\draw[thick] (-1,1)--(-2, 2)--(-1,3)--(0,2)--cycle;
	\draw[thick] (-2,4)--(-1,3)--(-1,4)--(-1,3)--(0,4);
	\draw[thick] (0,0)--(1,-1)--(1,0)--(1,-1)--(2,0);		
	\draw[thick] (1,-1)--(2,-2)--(1,-3)--(0,-2)--cycle;
	\draw[thick] (0,-4)--(1,-3)--(2,-4)--(1,-3)--(0.66,-4)--(1,-3)--(1.33,-4);
	\draw (0,0) node {{$\bullet$}} ; 	
	\draw (-1,2) node {\scalebox{0.9}{$g_1$}} ; 	
	\draw (1,-2) node {\scalebox{0.9}{$g_2$}} ; 	
	\draw (-2,4.5) node {{\scalebox{0.8}{$1$}}} ; 									
	\draw (2,0.5) node {{\scalebox{0.8}{$n$}}} ; 						
	\draw (-2,-0.5) node {{\scalebox{0.8}{$1$}}} ; 													
	\draw (2,-4.5) node {{\scalebox{0.8}{$m$}}} ; 								
\end{tikzpicture}}}~.
\]
We denote by $\nu=\alpha+\beta+\gamma$ the decomposition of $\nu$ according to these three types of composites. 
Recall that the operator $\rhd$ is linear on the right-hand side, that is $\Theta \rhd \nu= \Theta \rhd \alpha + \Theta \rhd \beta + 
\Theta \rhd \gamma$~. The first term $\Theta \rhd \alpha$ is made up composites of type A only. 
The terms in $\Theta \rhd \beta$ are of types B and C according to whether there exists an element of $\Theta$ connected (C) or not (B) to one output below and one output above the differential $\d$. The terms in $\Theta \rhd \gamma$ are made up of composites of type C only. 
Splitting apart the terms of type A (respectively B) whether the operation above (respectively below) the differential is trivial or not, we decompose further the elements $\Theta \rhd \nu$ into the following five types of composites: 
\begin{align*}
\underbrace{\vcenter{\hbox{\begin{tikzpicture}[scale=0.4]
	\draw[thick] (0,0)--(0,1);
	\draw[thick] (0,0)--(1,-1)--(1,0)--(1,-1)--(2,0);		
	\draw[thick] (1,-1)--(2,-2)--(1,-3)--(0,-2)--cycle;
	\draw[thick] (0,-4)--(1,-3)--(2,-4)--(1,-3)--(0.66,-4)--(1,-3)--(1.33,-4);
	\draw (0,0) node {{$\bullet$}} ; 	
	\draw (1,-2) node {\scalebox{0.9}{$g$}} ; 	
	\draw (0,1.5) node {{\scalebox{0.8}{$1$}}} ; 				
	\draw (2,0.5) node {{\scalebox{0.8}{$n$}}} ; 						
	\draw (0,-4.5) node {{\scalebox{0.8}{$1$}}} ; 							
	\draw (2,-4.5) node {{\scalebox{0.8}{$m$}}} ; 								
\end{tikzpicture}}}}_{\mathrm{(I)}}
\qquad \quad
\underbrace{\vcenter{\hbox{\begin{tikzpicture}[scale=0.4]
	\draw[thick] (0,0)--(0,1);
	\draw[thick] (0,1)--(-1, 2)--(0,3)--(1,2)--cycle;
	\draw[thick] (-1,4)--(0,3)--(0,4)--(0,3)--(1,4);
	\draw[thick] (0,0)--(1,-1)--(1,0)--(1,-1)--(2,0);		
	\draw[thick] (1,-1)--(2,-2)--(1,-3)--(0,-2)--cycle;
	\draw[thick] (0,-4)--(1,-3)--(2,-4)--(1,-3)--(0.66,-4)--(1,-3)--(1.33,-4);
	\draw (0,0) node {{$\bullet$}} ; 	
	\draw (0,2) node {\scalebox{0.9}{$g_1$}} ; 	
	\draw (1,-2) node {\scalebox{0.9}{$g_2$}} ; 	
	\draw (-1,4.5) node {{\scalebox{0.8}{$1$}}} ; 									
	\draw (2,0.5) node {{\scalebox{0.8}{$n$}}} ; 						
	\draw (0,-4.5) node {{\scalebox{0.8}{$1$}}} ; 							
	\draw (2,-4.5) node {{\scalebox{0.8}{$m$}}} ; 								
\end{tikzpicture}}}}_{\mathrm{(II)}}
\qquad \quad
\underbrace{\vcenter{\hbox{\begin{tikzpicture}[scale=0.4]
	\draw[thick] (0,0)--(0,-1);
	\draw[thick] (0,0)--(-1,1)--(-2,0)--(-1,1)--(-0.66,0)--(-1,1)--(-1.33,0);
	\draw[thick] (-1,1)--(-2, 2)--(-1,3)--(0,2)--cycle;
	\draw[thick] (-2,4)--(-1,3)--(-1,4)--(-1,3)--(0,4);
	\draw (0,0) node {{$\bullet$}} ; 	
	\draw (-1,2) node {\scalebox{0.9}{$g$}} ; 	
	\draw (-2,4.5) node {{\scalebox{0.8}{$1$}}} ; 				
	\draw (0,4.5) node {{\scalebox{0.8}{$n$}}} ; 					
	\draw (-2,-0.5) node {{\scalebox{0.8}{$1$}}} ; 							
	\draw (0,-1.5) node {{\scalebox{0.8}{$m$}}} ; 							
\end{tikzpicture}}}}_{\mathrm{(III)}}
\qquad \quad
\underbrace{\vcenter{\hbox{\begin{tikzpicture}[scale=0.4]
	\draw[thick] (0,0)--(-1,1)--(-2,0);
	\draw[thick] (0,0)--(-1,1)--(-2,0)--(-1,1)--(-0.66,0)--(-1,1)--(-1.33,0);
	\draw[thick] (-1,1)--(-2, 2)--(-1,3)--(0,2)--cycle;
	\draw[thick] (-2,4)--(-1,3)--(-1,4)--(-1,3)--(0,4);
	\draw[thick] (0,0)--(1,-1)--(1,0)--(1,-1)--(2,0);		
	\draw[thick] (1,-1)--(2,-2)--(1,-3)--(0,-2)--cycle;
	\draw[thick] (0,-4)--(1,-3)--(2,-4)--(1,-3)--(0.66,-4)--(1,-3)--(1.33,-4);
	\draw (0,0) node {{$\bullet$}} ; 	
	\draw (-1,2) node {\scalebox{0.9}{$g_1$}} ; 	
	\draw (1,-2) node {\scalebox{0.9}{$g_2$}} ; 	
	\draw (-2,4.5) node {{\scalebox{0.8}{$1$}}} ; 									
	\draw (2,0.5) node {{\scalebox{0.8}{$n$}}} ; 						
	\draw (-2,-0.5) node {{\scalebox{0.8}{$1$}}} ; 							
	\draw (0,-4.5) node {{\scalebox{0.8}{$2$}}} ; 							
	\draw (2,-4.5) node {{\scalebox{0.8}{$m$}}} ; 								
\end{tikzpicture}}}}_{\mathrm{(IV)}}
\qquad \quad
\underbrace{\vcenter{\hbox{\begin{tikzpicture}[scale=0.4]
	\draw[thick] (0,3)--(3,1)--(3,-1)--(0,-3);
	\draw[thick] (3,3)--(0,1)--(0,-1)--(3,-3);		
	\draw[thick] (1.5,3)--(1.5,2)--(1,1)--(1,-1)--(1.5,-2)--(1,-3);			
	\draw[thick] (1.5,3)--(1.5,2)--(2,1)--(2,-1)--(1.5,-2)--(2,-3);				
	\draw (0,0) node {{$\bullet$}} ; 	
	\draw (0,3.5) node {{\scalebox{0.8}{$1$}}} ; 				
	\draw (3,3.5) node {{\scalebox{0.8}{$n$}}} ; 					
	\draw (0,-3.5) node {{\scalebox{0.8}{$1$}}} ; 							
	\draw (3,-3.5) node {{\scalebox{0.8}{$m$}}} ; 								
\end{tikzpicture}}}}_{\mathrm{(V)}}
\end{align*}

In order to proceed further with the study of these five cases, let us introduce the positive integer $\mathrm{T}(m, i, g,p)$ which counts the number of ways to compose $p$ chosen indistinguishable outputs of a vertex of a graph with graphs of type $\Theta$ in order to produce a new graph with $i$ more distinguishable inputs, with  $m$ more distinguishable outputs, and with genus increased by $g$.
\[
\vcenter{\hbox{\begin{tikzpicture}[scale=0.8]
	\coordinate (A) at (0,-1);
	\coordinate (A0) at (-6,-3);	
	\coordinate (A1) at (-5,-3);	
	\coordinate (A2) at (-4,-3);		
	\coordinate (A3) at (-2,-3);		
	\coordinate (A3bis) at (-1,-3);					
	\coordinate (A4) at (2,-3);				
	\coordinate (A5) at (3,-3);					
	\coordinate (A6) at (4,-3);		
	\draw ($(A) + (1.5,-3.5)$) node {\scalebox{1}{$\cdots$}} ; 					
	\draw ($(A) + (-2.25,-0.5)$) node {\scalebox{0.8}{$1$}} ; 						
	\draw ($(A) + (1.75,-0.5)$) node {\scalebox{0.8}{$p$}} ; 						
	\coordinate (B) at (-4,-4.5);
	\draw[thick] ($(B) +(-0.5,0)$) --  ($(B)+(0,0.5)$) --  ($(B)+(0.5,0)$)--  ($(B)+(0,-0.5)$) -- cycle;
	\draw[thick] 
		($(B) +(0,0.5)$) -- (A0) -- (A)	
		($(B) +(0,0.5)$) -- (A1) -- (A)	
		($(B) +(0,0.5)$) -- (A2)-- (A);	
	\draw[thick] 
		($(B) +(0,-0.5)$) -- ($(B) +(0,-0.5)+(-0.66,-1)$)
		($(B) +(0,-0.5)$) -- ($(B) +(0,-0.5)+(0,-1)$)
		($(B) +(0,-0.5)$) -- ($(B) +(0,-0.5)+(0.66,-1)$);	
	\draw (B) node {\scalebox{0.9}{$g_1$}} ; 						
	\draw ($(B) + (0,-1.75)$) node {\scalebox{0.9}{$O_1$}} ; 
	\draw[thick]
		($(B) +(0,0.5)$) -- ++(0.5, 0.5)
		($(B) +(0,0.5)$) -- ++(0.8, 0.5);		
	\draw ($(B) + (0,0.5) + (0.7, 0.75)$) node {\scalebox{0.9}{$I_1$}} ; 		
	\draw ($(A0)+(-0.2,0)$) node {\scalebox{0.8}{$1$}} ; 			
	\draw ($(A2)+(-0.25,0)$) node {\scalebox{0.8}{$e_1$}} ; 				
	\coordinate (B) at (-1,-4.5);
	\draw[thick] ($(B) +(-0.5,0)$) --  ($(B)+(0,0.5)$) --  ($(B)+(0.5,0)$)--  ($(B)+(0,-0.5)$) -- cycle;
	\draw[thick] 
		($(B) +(0,0.5)$) -- (A3)-- (A)
		($(B) +(0,0.5)$) -- (A3bis)-- (A);			
	\draw[thick] 
		($(B) +(0,-0.5)$) -- ($(B) +(0,-0.5)+(-0.5,-1)$)
		($(B) +(0,-0.5)$) -- ($(B) +(0,-0.5)+(0.5,-1)$);	
	\draw (B) node {\scalebox{0.9}{$g_2$}} ; 
	\draw ($(B) + (0,-1.75)$) node {\scalebox{0.9}{$O_2$}} ; 														
	\draw[thick]
		($(B) +(0,0.5)$) -- ++(0.2, 0.5)
		($(B) +(0,0.5)$) -- ++(0.5, 0.5)
		($(B) +(0,0.5)$) -- ++(0.8, 0.5);		
	\draw ($(B) + (0,0.5) + (0.5, 0.75)$) node {\scalebox{0.9}{$I_2$}};
	\draw ($(A3)+(-0.2,0)$) node {\scalebox{0.8}{$1$}} ; 			
	\draw ($(A3bis)+(-0.25,0)$) node {\scalebox{0.8}{$e_2$}} ; 					 			
	\coordinate (B) at (4,-4.5);
	\draw[thick] ($(B) +(-0.5,0)$) --  ($(B)+(0,0.5)$) --  ($(B)+(0.5,0)$)--  ($(B)+(0,-0.5)$) -- cycle;
	\draw[thick] 
		($(B) +(0,0.5)$) -- (A4)-- (A)	
		($(B) +(0,0.5)$) -- (A5)-- (A)				
		($(B) +(0,0.5)$) -- (A6)-- (A);
	\draw[thick] 
		($(B) +(0,-0.5)$) -- ($(B) +(0,-0.5)+(-0.9,-1)$)
		($(B) +(0,-0.5)$) -- ($(B) +(0,-0.5)+(-0.3,-1)$)
		($(B) +(0,-0.5)$) -- ($(B) +(0,-0.5)+(0.3,-1)$)		
		($(B) +(0,-0.5)$) -- ($(B) +(0,-0.5)+(0.9,-1)$);	
	\draw (B) node {\scalebox{0.9}{$g_k$}}; 											
	\draw ($(B) + (0,-1.75)$) node {\scalebox{0.9}{$O_k$}}; 
	\draw[thick]
		($(B) +(0,0.5)$) -- ++(0.5, 0.5)
		($(B) +(0,0.5)$) -- ++(0.8, 0.5);			
	\draw ($(B) + (0,0.5) + (0.75, 0.75)$) node {\scalebox{0.9}{$I_k$}};
	\draw ($(A4)+(-0.3,0)$) node {\scalebox{0.8}{$1$}} ; 			
	\draw ($(A6)+(-0.4,0)$) node {\scalebox{0.8}{$e_k$}} ; 					
\end{tikzpicture}}}
\]
Let $k\geqslant 1$ denote the number of graphs of type $\Theta$ on the bottom level. 
Grafting the $p$ ouputs to these $k$ graphs already increases the genus by $p-k$. 
Since the total increase of genus should be equal to $g$, one must have $k\geqslant p-g$. 
For any $\max(1, p-g) \leqslant k\leqslant p$, we consider first a partition $O_1 \sqcup O_2 \sqcup \cdots \sqcup O_k=[m]$ of the set 
$[m]=\{1, \ldots ,m\}$ into $k$ non-empty blocks. 
Their number is given by the Stirling number of second kind $\mathrm{S}(m,k)$~. 
We choose to order the blocks of the partition increasingly according to their smallest elements. 
Then, we consider an ordered partition 
$I_1 \sqcup I_2 \sqcup \cdots \sqcup I_k=[i]$ of the set 
$[i]=\{1, \ldots ,i\}$ into $k$ possibly empty blocks. 
There are $k^i$ possible choices. 
After that, we consider a composition $e_1+e_2+\cdots+e_k=p$ of the integer $p$ into $k$ (positive) parts: there are $\binom{p-1}{k-1}$ of them. 
Finally, it remains to match the require genus: for that, we consider a weak composition 
$g_1+g_2+\cdots+g_k=g-p+k$
of the integer $g-p+k$ into $k$ (non-negative) parts. Their number is equal to $\binom{g-p+2k-1}{k-1}$. In the end, we get the value of the combinatorial coefficient: 
\[
\mathrm{T}(m, i, g,p)=\sum_{k=\max(1, p-g)}^p \mathrm{S}(m,k)k^i \binom{p-1}{k-1}\binom{g-p+2k-1}{k-1}~. 
\]

For any $m,n\geqslant 1$,  $g\geqslant 0$, such that $m+n+g\geqslant 3$, the coefficient of the composite of type  (I) 
on the left-hand side of Equation~\eqref{eq:CRUCIAL} is equal to $-1$ and the coefficient of the right-hand side is equal to 
\begin{equation*}
\sum_{\substack{
0 \leqslant l \leqslant n-1\\
0\leqslant g_2 \leqslant g\\
1\leqslant p\leqslant m+g-g_2\\
(p,l,g_2)\neq (1,0,0)}}
\binom{n-1}{l} A^{g_2}_{p,l}\, \mathrm{T}(m, n-l-1, g-g_2,p)~.
\end{equation*}
This is the coefficient of the elements in $\Theta \rhd \alpha$ of type (I) obtained by composing all the outputs of one element of type A 
(with one input labelled by the differential, $l$ other inputs, $p$ outputs, and genus $g_2$)
above with some inputs of a level of elements of $\Theta$ below. The missing index $(1,0,0)$ would correspond to the composite 
where the element of type A above is the sole differential: such a term cannot be considered since the differential alone is not part of $\nu$. 

\medskip

We define the \emph{weight} of a triple $(m,n,g)$ by $m+n+2g-2$. It  corresponds to the number of generators making up the element of the properad $\mathrm{Frob}$ having arity $(m,n)$ and genus $g$. 
For the triple $(1,1,0)$ of weight 0, Equation~\eqref{eq:CRUCIAL} is trivially true. 
For the two triples $(1,2,0)$ and $(2,1,0)$ of weight 1, the coefficients of the components of Equation~\eqref{eq:CRUCIAL} of type (I) give respectively $-1 = A^0_{1,1}$ and $-1=A^0_{2,0}$~. For any triple $(m,n,g)$ of 
higher weight, the right-hand side of the equation is made up of a sum where the first term is equal to $A^{g}_{m, n-1}$ and where all the other terms have strictly lower weight. Working by induction on the weight,  this prescribes the values of all the coefficients $A^{g}_{m, n-1}$, except $A^0_{1,0}$~. 

\medskip

For any $m,n\geqslant 1$,  $g\geqslant 0$, such that $m+n+g\geqslant 3$, 
when the composite of type (II) presents a trivial operation below the differential, 
the coefficient on the left-hand side of Equation~\eqref{eq:CRUCIAL} is equal to $1$ and the coefficient of the right-hand side is equal to 
$A^0_{1,0}$~, which forces it to be equal to $1$. 
When the composite of type (II) presents a non-trivial operation below the differential, 
the coefficient 
on the left-hand side of Equation~\eqref{eq:CRUCIAL} is equal to $0$ and the coefficient of the right-hand side is equal to 
\begin{equation*}
\sum_{\substack{
0 \leqslant l \leqslant n-1\\
0\leqslant g_2 \leqslant g\\
1\leqslant p\leqslant m+g-g_2}}
\binom{n-1}{l} A^{g_2}_{p,l}\, \mathrm{T}(m, n-l-1, g-g_2,p)~. 
\end{equation*}
Since there is now a non-trivial operation above the differential, all the composite in $\Theta \rhd \alpha$ are present including the case 
$(p,l, g_2)=(1,0,0)$ corresponding to the coefficient of $A^0_{1,0}=1$~. Therefore, all these equations are satisfied by the preceding case. 

\medskip

For any $m\geqslant 2$, $n\geqslant 1$, and $g\geqslant 0$, the coefficient of the composite of type (III) 
on the left-hand side of Equation~\eqref{eq:CRUCIAL} is equal to $1$ and the coefficient of the right-hand side is equal to 
\begin{equation*}
\sum_{l=0}^{n-1}
\binom{n}{l}  (-1)^{n-l-1}A^0_{1,0}~.
\end{equation*}
Indeed, the element in $\Theta \rhd \beta$ of type (III) of arity $(m,n)$ and genus $g$ is obtained as a sum of compositions 
of a graph of type B, with trivial operation below the differential, and elements of type $\Theta$ below. 
In $\beta$, the only graphs of type B, with trivial operation below the differential, having non-trivial coefficient are the ones with 2 outputs and genus 0. 
So the only non-trivial composites in $\Theta \rhd \beta$ of type (III) are obtained when one composes such a graph above with only one graph of type $\Theta$, which must be of arity $(m-1, l+1)$, with $0\leqslant n-1$, and genus $g$. In this case, the coefficient is equal 
$\binom{n}{l} \, (-1)^{n-l-1}A^0_{1,0}$~. Since $A^0_{1,0}=1$, all these equations hold. 

\medskip

Regarding the composites of type (IV), we denote respectively by $m'$ and $n'$ the number of outputs and inputs of the sub-graph above the differential and we denote respectively by $m''$ and $n''$ the number of outputs and inputs of the sub-graph below the differential. 
Since the only graphs of type B in $\beta$ have 2 outputs and genus 0 above the differential, the only way to get composites of type (IV) by grafting below graphs of type $\Theta$ amounts to composing the unique left output above the differential to one graph of type $\Theta$ having genus $g_1$ and $m'$ outputs. So in this case, the coefficient of the left-hand side of Equation~\eqref{eq:CRUCIAL} is equal to $0$ and the coefficient of the right-hand side is equal to 
\begin{align*}
&\sum_{\substack{1\leqslant k \leqslant n'\\
0 \leqslant l \leqslant n''\\
0\leqslant g'_2 \leqslant g_2\\
1\leqslant p\leqslant m''+g_2-g'_2}}
\binom{n'}{k}(-1)^{k-1}
\binom{n''}{l} A^{g'_2}_{p,l}\, \mathrm{T}(m'', n''-l, g_2-g'_2,p)
=\\
&\sum_{\substack{1\leqslant k \leqslant n'}}
\binom{n'}{k}(-1)^{k-1}
\sum_{\substack{
0 \leqslant l \leqslant n''\\
0\leqslant g'_2 \leqslant g_2\\
1\leqslant p\leqslant m''+g_2-g'_2}}
\binom{n''}{l}  A^{g'_2}_{p,l}\, \mathrm{T}(m'', n''-l, g_2-g'_2,p)~. 
\end{align*}
Since $m''+n''+g_2\geqslant 2$, the last sums is equal to $0$ by case (II). 

\medskip

Finally, it remains to treat the case of composites of type (V), for any $m,n,g \geqslant 1$~. 
As explained above, their can appear from either 
$\Theta \rhd \beta$ or $\Theta \rhd \gamma$ in $\Theta \rhd \nu$. 
The first way to get composites of type (V) amounts to composing one graph of type B above a level of graphs of type $\Theta$, where one such graph is connected to the input above the differential and to at least one output below the differential. 
Let us denote respectively by $n'$ and $l$ the number of inputs above and below the differential of the graph of type B, which has genus equal to $g_2$ and a total of $p+1$ outputs. 
The coefficient coming from such a composite is equal to 
\[
(-1)^{n'-1}A^{g_2}_{p,l} \mathrm{T}(m, n-n'-l+1, g-g_2-1, p)~, 
\]
since it is given by the number of ways of composing such a graph of type B above with $k$ graphs of type $\Theta$ below with one more distinguished input than $n-n'-l$, which is connected to the input coming  from above the differential. 
Considering all such composites, we get the following overall coefficient 
\begin{align}\tag{$\looparrowleft$}\label{eq:TermeB}
\sum_{\substack{
1\leqslant n'\leqslant n\\ 
0 \leqslant l \leqslant n-n'\\
0\leqslant g_2 \leqslant g-1\\
1\leqslant p\leqslant m+g-g_2-1}}
\binom{n}{n', l, n-n'-l}
(-1)^{n'-1}A^{g_2}_{p,l} \mathrm{T}(m, n-n'-l+1, g-g_2-1, p)~, 
\end{align}
The second way to get composites of type (V) amounts to composing one graph of type C above a level of graphs of type $\Theta$, which produces the coefficient 
\begin{equation}\tag{$\looparrowright$}\label{eq:TermeC}
\sum_{\substack{
1 \leqslant l \leqslant n\\
1\leqslant g_2 \leqslant g\\
1\leqslant p\leqslant m+g-g_2}}
\binom{n}{l} C^{g_2}_{p,l}\, \mathrm{T}(m, n-l, g-g_2,p)~. 
\end{equation}
For any $m,n,g \geqslant 1$, the coefficient of the composite of type  (V) 
on the left-hand side of Equation~\eqref{eq:CRUCIAL} is equal to $0$ and the coefficient of the right-hand side is equal to the sum of the two above terms \eqref{eq:TermeB} and \eqref{eq:TermeC}. 
The case $m=n=g=1$ gives $C^1_{1,1}=-A^{0}_{1,0}=-1$. 
Then, for triples $(m,n,g)$ of higher weight, the term \eqref{eq:TermeB} is known and the term \eqref{eq:TermeC} is made up of $C^g_{m,n}$ plus terms of stricly lower weight: this prescribes the value of $C^g_{m,n}$ by induction. 

\medskip

Let us now show that the recursive formula 
\begin{equation*}
\sum_{\substack{
0 \leqslant l \leqslant n\\
0\leqslant g' \leqslant g\\
1\leqslant p\leqslant m+g-g'}}
\binom{n}{l} A^{g'}_{p,l}\, \mathrm{T}(m, n-l, g-g',p)=0~, 
\end{equation*}
for $m+n+2g\geqslant 2$, with initial value $A^0_{1,0}=1$,
admits for solutions
\begin{align*}
A^g_{m,n}=\sum_{k=1}^{m+n+2g-1} (-1)^k
\sum_{\Xi_k(m,n,g)} \prod_{j=2}^k \mathrm{T}(p_j, |I_j|, g_j, p_{j-1})~, 
 \end{align*}
where the sums run over the sets $\Xi_k(m,n,g)$ made up of the partitions $I_1 \sqcup I_2 \sqcup \cdots \sqcup I_k=\{2, \ldots, n\}$, the integer partitions $g_1+g_2+\cdots+g_k=g$, and the integers $p_1, \ldots, p_{k-1}$ such that 
$1\leqslant p_1 \leqslant p_2+g_2 \leqslant p_3 + g_2+g_3 \leqslant p_{k-1}+g_2+\cdots+g_{k-1}\leqslant 
m+g_2+\cdots+g_k$, satisfying $p_{j}-p_{j-1}+|I_j|+2g_j\geqslant 1$, for any $1\leqslant j\leqslant k$, under the convention $p_0=1$ and $p_k=m$. It is understood that for $k=1$ the product is equal to $1$. 
Heuristically speaking, the recursive formula is the fixed point equation $\Theta \rhd (\alpha+\delta)=0$, whose solution is given by 
the series $\alpha=\sum_{k\geqslant 1} (-1)^k \underbrace{\Theta \, \bar\rhd\, (\Theta \,\bar\rhd\, ( \cdots (\Theta \,\bar\rhd\,(\Theta \,\bar\rhd\,}_{k \ \text{times}} \delta))))$~, where $\bar\rhd$ means that we do not allow only identities from $\Theta$.

\medskip

We prove this claim by induction on the weight $m+n+2g-1$. For $m+n+2g-1=1$, it is straightforward to compute 
$A^0_{1,1}=-1$ and $A^0_{2,0}=-1$ from the recursive formula. This coincides with the part $k=1$ of the above given solution. Suppose now that the result holds true up to weight $m+n+2g-2$. Since $\mathrm{T}(m,0,0,m)=1$, we get the coefficient $A^g_{m,n}$ as follows 
\begin{align*}
A^g_{m,n}&= - \sum_{\substack{
0 \leqslant l \leqslant n\\
0\leqslant g' \leqslant g\\
1\leqslant p\leqslant m+g-g'\\
(p,l,g')\neq(m,n,g)\\
(p,l,g')\neq(1,0,0)}}
\binom{n}{l} A^{g'}_{p,l}\, \mathrm{T}(m, n-l, g-g',p)
-A^0_{1,0}
\\
&=- \sum_{\substack{
0 \leqslant l \leqslant n\\
0\leqslant g' \leqslant g\\
1\leqslant p\leqslant m+g-g'\\
(p,l,g')\neq(m,n,g)\\
(p,l,g')\neq(1,0,0)}}
\binom{n}{l} 
\left(
\sum_{k=1}^{p+l+2g'-1} (-1)^k
\sum_{\Xi_k(p,l,g')} \prod_{j=2}^k \mathrm{T}(p_j, |I_j|, g_j, p_{j-1})
\right)
\, \mathrm{T}(m, n-l, g-g',p)-1\\
&=
\sum_{k+1=2}^{m+n+2g-1} (-1)^{k+1}
\sum_{\Xi_{k+1}(m,n,g)} \prod_{j=2}^{k+1} \mathrm{T}(p_j, |I_j|, g_j, p_{j-1})-1\\
&=
\sum_{k=1}^{m+n+2g-1} (-1)^k
\sum_{\Xi_k(m,n,g)} \prod_{j=2}^k \mathrm{T}(p_j, |I_j|, g_j, p_{j-1})~, 
\end{align*}
where the new level of indices is given by $g_{k+1}=g'$, $p_k=p$, and where the subset $I_{k+1}$ is made up of the $l$ elements from $\{2, \ldots, n\}$ chosen by the binomial coefficient.

\medskip

In order to establish the last formula for the coefficients $C$, we denote by $Q(m,n,g)$ the formula given in \eqref{eq:TermeB}, for $m,n,g\geqslant 0$, and let us prove that it is equal to 
$Q(m,n,1)=(-1)^{m+n}$, for $g=1$, and to $Q(m,n,g)=0$, for $g>1$. First, we notice that 
\begin{align*}
Q(m,n,g)&= 
\sum_{\substack{
1\leqslant n'\leqslant n\\ 
0 \leqslant l \leqslant n-n'\\
0\leqslant g_2 \leqslant g-1\\
1\leqslant p\leqslant m+g-g_2-1}}
\binom{n}{n', l, n-n'-l}
(-1)^{n'-1}A^{g_2}_{p,l} \mathrm{T}(m, n-n'-l+1, g-g_2-1, p)\\
&=
\sum_{1\leqslant n'\leqslant n} 
(-1)^{n'-1} \binom{n}{n'}
\underbrace{
\sum_{\substack{
0 \leqslant l \leqslant n-n'\\
0\leqslant g_2 \leqslant g-1\\
1\leqslant p\leqslant m+g-g_2-1}}
\binom{n-n'}{l}
A^{g_2}_{p,l} \mathrm{T}(m, n-n'-l+1, g-g_2-1, p)
}_{D(m,n-n',g-1)\coloneq}
\end{align*}
We claim that $D(m,0,0)=(-1)^{m-1}$, for $m\geqslant 1$, and $D(m,n,g)=0$, when $(n,g)\neq (0,0)$. This would prove the abovementioned formula since then
\begin{align*}
Q(m,n,1)=\sum_{1\leqslant n'\leqslant n} 
(-1)^{n'-1} \binom{n}{n'}
D(m,n-n',0)=
(-1)^{n-1}(-1)^{m-1}=(-1)^{m+n}
\end{align*}
and 
\begin{align*}
Q(m,n,g)=
\sum_{1\leqslant n'\leqslant n} 
(-1)^{n'-1} \binom{n}{n'}
D(m,n-n',g-1)=0~, 
\end{align*}
for $g\geqslant 1$. 
The first formula for $D$ is straightforward to establish: 
\begin{align*}
D(m,0,0)&=
\sum_{1\leqslant p\leqslant m}
A^{0}_{p,0} \mathrm{T}(m, 1, 0, p)
=
\sum_{1\leqslant p\leqslant m}
(-1)^{p-1}(p-1)!pS(m,p)
\\
&=-\sum_{1\leqslant p\leqslant m}
(-1)^{p}p!S(m,p)
=(-1)^{m-1}~. 
\end{align*}
Regarding the second case $(m,n,g)$ for $(n,g)\neq(0,0)$, we are going to use the aforementioned formula for the coefficients $A$, which shows that $A(m,n,g)$ is equal to the signed sum with coefficient $(-1)^k$ of the number of ways of decomposing the operation of genus $g$ with $n$ inputs and $m$ outputs into $k$ non-trivial levels with one operation at the top carrying the first input, with all the outputs at the bottom, and such that, for each level, the upward directed graph is connected. Including this formula inside the definition of $D$, we get the same interpretation for $D(m,n,g)$, except that the bottom level of each directed connected graph carries the input $n+1$. 
\begin{figure*}[h]
\begin{align*}
(-1)^3\ \vcenter{\hbox{\begin{tikzpicture}[scale=0.5]
	\draw[thick] (0,2)--(0,0)--(1,-1);
	\draw[thick] (0,0)--(1,-1)--(1.5,-0.5);		
	\draw[thick] (1,-1)--(1.5,-1.5)--(1,-2)--(0.5,-1.5)--cycle;
	\draw[thick] (1,-2)--(0,-3)--(1,-2)--(2,-3);
	\draw[thick] (1,-1)--(1,-2);			
	\draw[thick] (1,-2)--(0,-3)--(0,-4)--(0.5, -4.5)--(0,-5)--(-0.5, -4.5)--(0, -4);	
	\draw[thick] (0,-5)--(-1, -6)--(0,-5)--(0,-6)--(0,-5)--(1,-6);
	\draw[thick] (1,-2)--(2,-3)--(2,-6);
	\draw[thick] (2,-4)--(2.5,-3.5);	
	\draw[thick] (0,-5)--(2,-7);	
	\draw[thick] (2,-5)--(2,-7)--(2.5, -7.5)--(2, -8)--(1.5, -7.5)--(2, -7);		
	\draw[thick] (2.5, -7.5)--(1, -9)--(2, -8)--(2, -9)--(2,-8)--(3,-9);			
	\draw[thick] (0,-5)--(-1,-6)--(-1, -7)--(-0.5, -7.5)--(-1,-8)--(-1.5, -7.5)--(-1, -7);	
	\draw[thick] (-1,-8)--(-2,-9)--(-1,-8)--(0,-9);		
	\draw[thick] (0,-5)--(0,-6)--(-1, -7);
	\draw[thick] (-1,-7)--(-1,-8);	
	\draw[thick] (2,-7)--(2.5,-6.5);							
	\draw[thick] (-1,-7)--(-1.5,-6.5);								
	\draw[thin, dashed] (-2,0)--(3,0);
	\draw[thin, dashed] (-2,-3)--(3,-3);	
	\draw[thin, dashed] (-2,-6)--(3,-6);		
	\draw (0,1) node {{$\bullet$}} ; 	
	\draw (0,2.5) node {{\scalebox{0.8}{$1$}}} ; 				
	\draw (2,-0.5) node {{\scalebox{0.8}{$4$}}} ; 						
	\draw (3,-3.5) node {{\scalebox{0.8}{$3$}}} ; 		
	\draw (-2,-6.5) node {{\scalebox{0.8}{$2$}}} ; 	
	\draw (3,-6.5) node {{\scalebox{0.8}{$\textbf{5}$}}} ; 		
	\draw (0,-9.5) node {{\scalebox{0.8}{$5$}}} ; 							
	\draw (-2,-9.5) node {{\scalebox{0.8}{$2$}}} ; 							
	\draw (1,-9.5) node {{\scalebox{0.8}{$1$}}} ; 							
	\draw (2,-9.5) node {{\scalebox{0.8}{$3$}}} ; 							
	\draw (3,-9.5) node {{\scalebox{0.8}{$4$}}} ; 	
\end{tikzpicture}}}
\end{align*}
\caption{A $3$-leveled graph appearing in the computation of $D(5,4,6)$}
		\label{Fig:Pyr}
	\end{figure*}
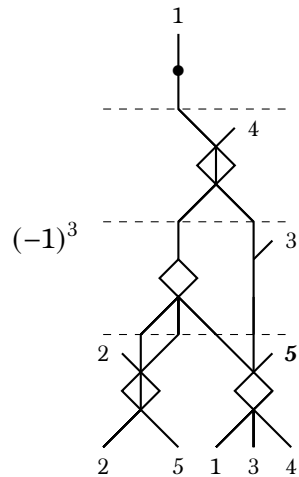

When $(n,g)\neq(0,0)$, each of such leveled directed graph is a composition which admits at least one operation $\BB$ with two inputs and one output. 
When one reads the graph from top to bottom, there are three possible situations for the first product to appear. 
\begin{enumerate}
\item One can encounter only levels made up of coproducts $\CC$ before to reach a level with a product having one input labeled by an element of $\{2, \ldots, n\}$; we consider the smallest such element $i$. Then there are two cases: either this product with input labeled by $i$ is alone on that level or not. 
In this last case, there is a unique way to pull up the product with input labeled by $i$ on an new upper level. 
These two cases produce graphs which are in one-to-one correspondence but which appear with different signs; so they cancel. 
\begin{align*}
- \ \ \vcenter{\hbox{\begin{tikzpicture}[scale=0.5]
	\draw[thick] (0,0)--(1,-1)--(1.5,-0.5);		
	\draw[thick] (1,-1)--(1.5,-1.5)--(1,-2)--(0.5,-1.5)--cycle;
	\draw[thick] (1,-2)--(0,-3)--(1,-2)--(2,-3);
	\draw[thick] (1,-1)--(1,-2);			
	\draw[thin, dashed] (-1,0)--(3,0);
	\draw[thin, dashed] (-1,-3)--(3,-3);	
	\draw (2,-0.5) node {{\scalebox{0.8}{$4$}}} ; 							
\end{tikzpicture}}}
\qquad 
\longleftrightarrow
\qquad 
+ \ \ 
\vcenter{\hbox{\begin{tikzpicture}[scale=0.5]
	\draw[thick] (0,0)--(1,-1)--(1.5,-0.5);		
	\draw[thick] (1,-4)--(1.5,-4.5)--(1,-5)--(0.5,-4.5)--cycle;
	\draw[thick] (1,-5)--(0,-6)--(1,-5)--(2,-6);
	\draw[thick] (1,-1)--(1,-5);			
	\draw[thin, dashed] (-1,0)--(3,0);
	\draw[thin, dashed] (-1,-3)--(3,-3);	
	\draw[thin, dashed] (-1,-6)--(3,-6);		
	\draw (2,-0.5) node {{\scalebox{0.8}{$4$}}} ; 							
\end{tikzpicture}}}
\end{align*}
\item If the first situation does not happen, then it is possible that the first product(s) encountered from top to bottom are at the top of operations 
$\vcenter{\hbox{\begin{tikzpicture}[baseline=1.8ex,scale=0.1]
	\draw[thick] (2,4) -- (2,2)--(4,4)--(2,2)--(0,4);
	\draw[thick] (4,0) -- (2,2);
	\draw[thick] (2,2) -- (0,0) -- (2,-2);
	\draw[thick] (2,-2) -- (4,0) -- (2,2);
	\draw[thick] (4,-4)--(2,-2) --(0,-4);	
\end{tikzpicture}}}$, that is with inputs grafted to upper edges and not part of a ``diamond'' creating an internal genus. 
Then there are two cases: either the level is made up of products or not. 
In this last case, we pull up all the products above the diamonds and the coproducts to create a new level. 
These two cases produce graphs which are in one-to-one correspondence but which appear with different signs; so they cancel. 
\begin{align*}
- \ \ \vcenter{\hbox{\begin{tikzpicture}[scale=0.5]
	\draw[thick] (0,0)--(1,-1)--(2,0);		
	\draw[thick] (1,-1)--(1.5,-1.5)--(1,-2)--(0.5,-1.5)--cycle;
	\draw[thick] (1,-2)--(0,-3)--(1,-2)--(2,-3);
	\draw[thick] (1,-1)--(1,-2);			
	\draw[thin, dashed] (-1,0)--(3,0);
	\draw[thin, dashed] (-1,-3)--(3,-3);							
\end{tikzpicture}}}
\qquad 
\longleftrightarrow
\qquad 
+ \ \ 
\vcenter{\hbox{\begin{tikzpicture}[scale=0.5]
	\draw[thick] (0,0)--(1,-1)--(2,0);		
	\draw[thick] (1,-4)--(1.5,-4.5)--(1,-5)--(0.5,-4.5)--cycle;
	\draw[thick] (1,-5)--(0,-6)--(1,-5)--(2,-6);
	\draw[thick] (1,-1)--(1,-5);			
	\draw[thin, dashed] (-1,0)--(3,0);
	\draw[thin, dashed] (-1,-3)--(3,-3);	
	\draw[thin, dashed] (-1,-6)--(3,-6);		
\end{tikzpicture}}}
\end{align*}
\item Finally, if it is neither of these two situations, it means that the first product encountered from top to bottom is part of a diamond. 
Then there are two cases: either the level is made up of ``pure'' diamonds, that is having no coproduct below, or not. 
In this last case, we pull up all the diamonds creating a new level, leaving the coproducts on the level below. 
These two cases produce graphs which are in one-to-one correspondence but which appear with different signs; so they cancel. 
\begin{align*}
- \ \ \vcenter{\hbox{\begin{tikzpicture}[scale=0.5]
	\draw[thick] (1,0)--(1,-1);		
	\draw[thick] (1,-1)--(1.5,-1.5)--(1,-2)--(0.5,-1.5)--cycle;
	\draw[thick] (1,-2)--(0,-3)--(1,-2)--(2,-3);
	\draw[thick] (1,-1)--(1,-2);			
	\draw[thin, dashed] (-1,0)--(3,0);
	\draw[thin, dashed] (-1,-3)--(3,-3);							
\end{tikzpicture}}}
\qquad 
\longleftrightarrow
\qquad 
+ \ \ 
\vcenter{\hbox{\begin{tikzpicture}[scale=0.5]
	\draw[thick] (1,0)--(1,-1);		
	\draw[thick] (1,-1)--(1.5,-1.5)--(1,-2)--(0.5,-1.5)--cycle;
	\draw[thick] (1,-5)--(0,-6)--(1,-5)--(2,-6);
	\draw[thick] (1,-1)--(1,-5);			
	\draw[thin, dashed] (-1,0)--(3,0);
	\draw[thin, dashed] (-1,-3)--(3,-3);	
	\draw[thin, dashed] (-1,-6)--(3,-6);		
\end{tikzpicture}}}
\end{align*}
\end{enumerate}
In the end, this proves that $D(m,n,g)=0$, for $(n,g)\neq(0,0)$.

\medskip

With the above formula for the coefficients $Q(m,n,g)$, the recursive formula defining the coefficients $C^g_{m,n}$ takes the following simpler form
\begin{equation*}
\sum_{\substack{
1 \leqslant l \leqslant n\\
1\leqslant g' \leqslant g\\
1\leqslant p\leqslant m+g-g_2}}
\binom{n}{l} C^{g'}_{p,l}\, \mathrm{T}(m, n-l, g-g',p)=
\left\{
\begin{array}{ll}
(-1)^{m+n+1}~, & \text{for}\ g=1~, \\
0~,  & \text{for}\ g\geqslant 2~, 
\end{array}
\right.
\end{equation*}
for $m,n,g\geqslant 1$, with initial value $C^1_{1,1}=-1$. 
It admits the following solutions 
\begin{align*}
C^g_{m,n}= 
\sum_{k=1}^{m+n+2g-4} (-1)^{k+1}
\sum_{\Xi_k(m,n,g)} \left(n+(-1)^{p_1+|I_1|}\right)\prod_{j=2}^k \mathrm{T}(p_j, |I_j|, g_j, p_{j-1})~,
\end{align*}
for $m+n+2g\geqslant 2$.
The proof by induction is similar to the one given above for the coefficients $A$, which  concludes the proof of \cref{thm:KoszulHierarchy}.
\end{proof}

\begin{corollary}
Let $(A, \d)$ be a chain complex whose underling graded module is equipped with a Frobenius bialgebra structure $(A, \mu, \Delta)$. The formula 
\begin{align*}\label{eq:hierarchy}
\nu_{m,n}=&
\sum_{\substack{
k+l=n \\ 
\sigma\in \mathsf{Sh}(k, l)}} 
(-1)^{m+l-1} m! m^{l-1}\, 
\left(\left(\mu_{m, l+1}^{0} \circ_1^1 \d\right) \circ_1^1 \mu_{1, k}^{0}\right) \cdot \sigma\\
&+ 
\sum_{\substack{
k+l=n \\ 
\sigma\in \mathsf{Sh}(k, l)\\
\tau \in \mathsf{Sh}(1, m-1) }} 
(-1)^{m+n-1}(m-1)!(m-1)^{l-1}
\, 
\tau^{-1}\cdot \left(\left(\mu_{m-1, l+1}^{0} \circ_1^1 \d\right) \circ_1^2 \mu_{2, k}^{0}\right) \cdot \sigma~, 
\end{align*}
for $m, n\geqslant 1$ and $m+n\geqslant 3$, defines a gauge homotopy trivial shifted homotopy Lie bialgebra. 
\end{corollary}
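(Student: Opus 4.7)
The plan is to derive this corollary from the $g=0$ specialisation of \cref{thm:KoszulHierarchy}. Under $g=0$, the genus partition $g_1+g_2=0$ forces $g_1=g_2=0$, and the third family of composites disappears outright thanks to the hypothesis $C^0_{m,n}=0$ stated in the theorem. The gauge homotopy triviality of the resulting structure is immediate from the construction: $\nu=\Theta^{-1}\cdot\d$ is by definition the gauge transform of the Maurer--Cartan element $\d$ corresponding to the abelian chain complex $(A,\d)$, hence lies in its gauge orbit.

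The remaining task is to identify the coefficients $A^0_{m,l}$ and $(-1)^{k-1}A^0_{m-1,l}$ of the first two sums in \cref{thm:KoszulHierarchy} with the explicit coefficients $(-1)^{m+l-1}m!\,m^{l-1}$ and $(-1)^{m+n-1}(m-1)!\,(m-1)^{l-1}$ of the corollary. Both reductions follow from a single closed formula
\[
A^0_{m,n}=(-1)^{m+n-1}\,m!\,m^{n-1}, \qquad m\geqslant 1,\ n\geqslant 0,
\]
under the convention $1\cdot 1^{-1}=1$ ensuring $A^0_{1,0}=1$. The second identification is then obtained by substituting $m\mapsto m-1$, $n\mapsto l$, and using $k+l=n$ to simplify $(-1)^{k-1}(-1)^{m+l-2}=(-1)^{m+n-1}$.

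To establish this closed formula, the strategy is to plug it into the recursion satisfied by $A^g_{m,n}$ that is derived in the proof of \cref{thm:KoszulHierarchy}. At $g=0$, the combinatorial coefficient $\mathrm{T}$ collapses to its unique surviving term $k=p$, giving $\mathrm{T}(m,i,0,p)=\mathrm{S}(m,p)\,p^{i}$, and the recursion reduces to
\[
\sum_{\substack{0\leqslant l\leqslant n \\ 1\leqslant p\leqslant m}}\binom{n}{l}\,A^0_{p,l}\,\mathrm{S}(m,p)\,p^{n-l}=0, \qquad m+n\geqslant 2.
\]
Substituting the proposed value and using $p^{l-1}\cdot p^{n-l}=p^{n-1}$, this expression factors as
\[
\left(\sum_{p=1}^{m}(-1)^{p-1}\,p!\,p^{n-1}\,\mathrm{S}(m,p)\right)\left(\sum_{l=0}^{n}\binom{n}{l}(-1)^{l}\right).
\]
For $n\geqslant 1$ the second factor vanishes as $(1-1)^n=0$, so the recursion holds automatically.

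The actual point requiring care is the case $n=0$, where one must verify
\[
\sum_{p=1}^{m}(-1)^{p-1}(p-1)!\,\mathrm{S}(m,p)=0, \qquad m\geqslant 2.
\]
This classical identity follows from the exponential generating function $\sum_{m\geqslant p}\mathrm{S}(m,p)\,x^m/m!=(e^x-1)^p/p!$ combined with the Taylor expansion of the logarithm: summing over $p$ with weights $(-1)^{p-1}(p-1)!$ yields $\log(1+(e^x-1))=x$, so extracting the coefficient of $x^m/m!$ gives $\delta_{m,1}$. This simultaneously recovers the initial value $A^0_{1,0}=1$ and produces the vanishing for $m\geqslant 2$, closing the induction and completing the proof.
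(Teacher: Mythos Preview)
Your computational argument for the coefficients $A^0_{m,l}$ is correct and matches the paper's approach: both verify the closed formula by substitution into the genus-$0$ recursion and factor out the binomial sum $\sum_l \binom{n}{l}(-1)^l$. You are in fact more careful than the paper on one point: you handle the case $n=0$ separately and justify the Stirling-number identity $\sum_{p=1}^m (-1)^{p-1}(p-1)!\,\mathrm{S}(m,p)=\delta_{m,1}$ via generating functions, whereas the paper's displayed computation only makes the factorisation explicit for $n\geqslant 1$.

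There is however a structural gap. You read off the formulas from the $g=0$ part of \cref{thm:KoszulHierarchy}, but you never explain why the genus-$0$ truncation of a shifted homotopy \emph{involutive} Lie bialgebra is a shifted homotopy Lie bialgebra, nor why gauge triviality in $\g_{\Frob^*,A}$ descends to gauge triviality in the convolution algebra controlling $\mathrm{sBiLie}_\infty$-structures. Your sentence ``$\nu=\Theta^{-1}\cdot\d$ is by definition the gauge transform of the Maurer--Cartan element $\d$'' only establishes gauge triviality in the involutive setting. The paper supplies the missing step: the surjection of properads $\mathrm{sBiLie}\twoheadrightarrow\mathrm{sIBiLie}$ induces an injection of Koszul dual coproperads $\mathrm{sBiLie}^{\ac}\hookrightarrow\mathrm{sIBiLie}^{\ac}\cong\Frob^*$, hence an injection $\mathrm{sBiLie}_\infty\hookrightarrow\mathrm{sIBiLie}_\infty$ whose image is precisely the genus-$0$ part. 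This simultaneously shows that the genus-$0$ operations form an $\mathrm{sBiLie}_\infty$-structure and, since the induced surjection of convolution Lie-graph algebras is compatible with the gauge action, that the resulting structure is gauge trivial there as well. You should include this argument.
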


\begin{figure*}[h]
\begin{align*}
\nu_{m,n}=&
\sum_{\substack{
k+l=n \\ 
\sigma\in \mathsf{Sh}(k, l)}} 
(-1)^{m+l-1} m! m^{l-1}
\vcenter{\hbox{\begin{tikzpicture}[scale=0.4]
	\draw[thick] (0,0)--(0,1);
	\draw[thick] (-1,2)--(0,1)--(0,2)--(0,1)--(1,2);
	\draw[thick] (0,0)--(1,-1)--(1,0)--(1,-1)--(2,0);		
	\draw[thick] (0,-2)--(1,-1)--(2,-2)--(1,-1)--(0.66,-2)--(1,-1)--(1.33,-2);
	\draw (0,0) node {{$\bullet$}} ; 	
	\draw (-1.25,2.5) node {{\scalebox{0.8}{$\sigma^{-1}(1)$}}} ; 				
	\draw (2.5,0.5) node {{\scalebox{0.8}{$\sigma^{-1}(n)$}}} ; 						
	\draw (0,-2.5) node {{\scalebox{0.8}{$1$}}} ; 							
	\draw (2,-2.5) node {{\scalebox{0.8}{$m$}}} ; 								
\end{tikzpicture}}}\\&
+\sum_{\substack{
k+l=n \\ 
\sigma\in \mathsf{Sh}(k, l)\\
\tau \in \mathsf{Sh}(1, m-1) }} 
(-1)^{m+n-1}(m-1)!(m-1)^{l-1}
\vcenter{\hbox{\begin{tikzpicture}[scale=0.4]
	\draw[thick] (0,0)--(-1,1)--(-2,0);
	\draw[thick] (-2,2)--(-1,1)--(-1,2)--(-1,1)--(0,2);
	\draw[thick] (0,0)--(1,-1)--(1,0)--(1,-1)--(2,0);		
	\draw[thick] (0,-2)--(1,-1)--(2,-2)--(1,-1)--(0.66,-2)--(1,-1)--(1.33,-2);
	\draw (0,0) node {{$\bullet$}} ; 		
	\draw (-2.25,2.5) node {{\scalebox{0.8}{$\sigma^{-1}(1)$}}} ; 				
	\draw (2.5,0.5) node {{\scalebox{0.8}{$\sigma^{-1}(n)$}}} ; 						
	\draw (-2.25,-0.5) node {{\scalebox{0.8}{$\tau(1)$}}} ; 							
	\draw (-0.25,-2.5) node {{\scalebox{0.8}{$\tau(2)$}}} ; 							
	\draw (2.25,-2.5) node {{\scalebox{0.8}{$\tau(m)$}}} ; 								
\end{tikzpicture}}}
\end{align*}
\caption{The shifted homotopy Lie bialgebra obtained by Koszul hierarchy.}
		\label{Fig:HoBiLie}
\end{figure*}

\begin{proof}
We claim first that the genus $0$ operations of a shifted homotopy involutive Lie bialgebra forms a shifted homotopy Lie bialgebra. 
This can be viewed from the injective morphism of quadratic data \cite{ManinVallette2020} underlying the surjective morphism of properads 
$\mathrm{sBiLie} \twoheadrightarrow \mathrm{sIBiLie}$, which induces an injective morphism of coproperads 
$\mathrm{sBiLie}^{\ac} \hookrightarrow \mathrm{sIBiLie}^{\ac}$
and then an injective morphism between the Koszul resolutions
$\mathrm{sBiLie}_\infty \hookrightarrow \mathrm{sIBiLie}_\infty$~.
So the genus $0$ operations of the Koszul hierarchy forms a shifted homotopy Lie bialgebra, which is gauge homotopy trivial. 

\medskip 

It remains to compute the genus $0$ coefficients $A^0_{m,l}$ from \cref{thm:KoszulHierarchy}: we claim that 
\[A^{0}_{m,l}=(-1)^{l+m-1} m! m^{l-1}
~,\]
for $l\geqslant 0$ and $m\geqslant 1$~. 
For $l=0$ and $m=1$, we get $A^0_{1,0}=1$~. Higher up, we have 
\begin{align*} 
\sum_{\substack{
0 \leqslant l \leqslant n\\
1\leqslant p\leqslant m}}
\binom{n}{l} 
(-1)^{l+p-1} p! p^{l-1}
\, 
\mathrm{T}(m, n-l, 0,p)&=
\sum_{\substack{
0 \leqslant l \leqslant n\\
1\leqslant p\leqslant m}}
\binom{n}{l} 
(-1)^{l+p-1} (p-1)! p^l
\, 
\mathrm{S}(m, p)p^{n-l}\\
&=
\sum_{p=1}^m
(-1)^{p-1} (p-1)! p^n \mathrm{S}(m, p)
\sum_{l=0}^n
\binom{n}{l} 
(-1)^{l} 
=0~.
\end{align*}
\end{proof}

Restricting even further the structure operations to one output, we recover the original Koszul hierarchy of \cite{Koszul85}, 
see \cite[Section~2.4]{GCTV12}, 
that is the shifted $\mathrm{L}_\infty$-algebra 
\[
\ell_n=\nu_{1,n}=
\sum_{\substack{
k+l=n \\ 
\sigma\in \mathsf{Sh}(k, l)}} 
(-1)^{l} \, 
\left(\left(\mu^{l} \circ_1 \d\right) \circ_1 \mu^{k-1} \right) \cdot \sigma
=
\sum_{\substack{
k+l=n \\ 
\sigma\in \mathsf{Sh}(k, l)}} 
(-1)^{l} \,
\vcenter{\hbox{\begin{tikzpicture}[scale=0.4]
	\draw[thick] (0,0)--(0,1);
	\draw[thick] (-1,2)--(0,1)--(0,2)--(0,1)--(1,2);
	\draw[thick] (0,0)--(1,-1)--(1,0)--(1,-1)--(2,0);		
	\draw[thick] (1,-2)--(1,-1);
	\draw (0,0) node {{$\bullet$}} ; 	
	\draw (-1.25,2.5) node {{\scalebox{0.8}{$\sigma^{-1}(1)$}}} ; 				
	\draw (2.5,0.5) node {{\scalebox{0.8}{$\sigma^{-1}(n)$}}} ; 						
\end{tikzpicture}}}
\]
coming from the data of a chain complex $(A, \d)$ and a commutative algebra structure $(A, \mu)$.  
Restricting the structure operations to just one input, one gets a  new shifted $\mathrm{L}_\infty$-coalgebra 
\begin{align*}
\mathcal{c}_m=\nu_{m,1}&=
(-1)^{m-1}(m-1)!\, 
\sum_{\tau \in \mathsf{Sh}(1, m-1)}
(-1)^{m}(m-2)! \, 
\tau^{-1}\cdot \left( \left(\Delta^{m-2} \circ_1^1\d \right) \circ_1^2 \Delta\right)
\\&=
(-1)^{m-1}(m-1)!\, 
\vcenter{\hbox{\begin{tikzpicture}[scale=0.4]
	\draw[thick] (1,0)--(1,1);
	\draw[thick] (1,-1)--(1,0);		
	\draw[thick] (0,-2)--(1,-1)--(2,-2)--(1,-1)--(0.66,-2)--(1,-1)--(1.33,-2);
	\draw (1,0) node {{$\bullet$}} ; 	
	\draw (1,1.5) node {{\scalebox{0.8}{$1$}}} ; 				
	\draw (0,-2.5) node {{\scalebox{0.8}{$1$}}} ; 							
	\draw (2,-2.5) node {{\scalebox{0.8}{$m$}}} ; 								
\end{tikzpicture}}}
 +
\sum_{\tau \in \mathsf{Sh}(1, m-1)}
(-1)^{m}(m-2)! \, 
\vcenter{\hbox{\begin{tikzpicture}[scale=0.4]
	\draw[thick] (1,0)--(0,1)--(-1,0);
	\draw[thick] (0,1)--(0,2);
	\draw[thick] (1,-1)--(1,0);		
	\draw[thick] (0,-2)--(1,-1)--(2,-2)--(1,-1)--(0.66,-2)--(1,-1)--(1.33,-2);
	\draw (1,0) node {{$\bullet$}} ; 		
	\draw (0,2.5) node {{\scalebox{0.8}{$1$}}} ; 				
	\draw (-1.25,-0.5) node {{\scalebox{0.8}{$\tau(1)$}}} ; 							
	\draw (-0.25,-2.5) node {{\scalebox{0.8}{$\tau(2)$}}} ; 							
	\draw (2.25,-2.5) node {{\scalebox{0.8}{$\tau(m)$}}} ; 								
\end{tikzpicture}}}
\end{align*}
produced by the data of a chain complex $(A, \d)$ and a cocommutative coalgebra structure $(A, \Delta)$.  
Turning these operations upside down, we get a canonical formula for a new shifted $\mathrm{L}_\infty$-algebra produced by a chain complex and a commutative algebra structure. This vertical mirror symmetric shifted $\mathrm{L}_\infty$-algebra is actually the one produced by the action of $\Theta$ on $\d$, and not by the action of $\Theta^{-1}$ as in the definition of the Koszul hierarchy. This other convention is equally interesting and, more generally, the present construction admits a large degree of freedom: 
one can act by any $\Theta^k$, with $k\in \ZZ$, 
one can change the coefficients of the assignment $\mathrm{K}$, 
and, last but not least, one can start from other types of bialgebras like Koszul double Poisson gebras \cite[Theorem~4.17]{LV22} to produce pre-Calabi--Yau algebras \cite{KTV25}.

\section{Twisting procedure}\label{sec:twisting}
In this section, we adapt the arguments of \cite[Section~4.5]{DotsenkoShadrinVallette22}, 
with some slight modifications,  
 in order  to settle the twisting procedure on  the properadic level, whose heuristic is as follows.
We work with a filtered infinitesimal dg coproperad $\oC$ and a complete dg module $A$: they give rise to 
the complete convolution Lie-graph algebra
\[
\g_{\mathcal{C}, A}=\left(\hom_{\Sy}\left(\oC, \eend_A\right), \F, \partial, \{\gra\}_{\gra \in \dcGra}\right)
\]
by \cite[Proposition~1.28]{CV25I}.
Suppose that the filtered infinitesimal coproperad $\oC$ admits an ``extension'' filtered counital infinitesimal coproperad $u\C$
containing an arity $(1,0)$ element $u$ of degree $0$ and such that $u\C$ projects onto $\oC$ under a map $\pi \, \colon \, u\C \twoheadrightarrow \oC$~. 
In this case, the original convolution algebra $\g_{\mathcal C,A}$ embeds into the convolution algebra associated to $u\C$: 
\[\pi^*\ \colon \ \g_{\C, A}=\hom_{\Sy}\big(\oC, \eend_A\big)\hookrightarrow  \g_{u\C, A}=\hom_{\Sy}\big(u\C, \eend_A\big)\ .\]
Under the assignment $u \mapsto a$, this bigger convolution algebra contains the elements of $A$; it also contains 
the two elements  
\[
\begin{tikzcd}[column sep=normal, row sep=tiny]
	\1 \ : \ u\C \ar[r,"\varepsilon"]  & \I \ar[r,"\id_A"] & \eend_A  &\text{and} &
	\delta \ : \ u\C \ar[r,"\varepsilon"]  & \I \ar[r,"\partial_A"] & \eend_A~, 
\end{tikzcd} 
\]
where $\varepsilon $ is the counit of $u\C$. 
Any Maurer--Cartan element $\bar \alpha$ in  $\g_{\mathcal{C},A}$ gives a Maurer--Cartan $\bar{\alpha}\pi$ in $\g_{u\C, A}$, that we prefer to view as $\alpha \coloneq \delta + \bar{\alpha}\pi$: notice that here $\delta$ is not a formal element, so $\alpha$ really denotes a square-zero element of degree $-1$ of $\g_{u\C, A}$.
The deformation gauge group of $\g_{u\C, A}$ is $\1+\F_1 \left(\g_{u\C, A}\right)_0$, so it gives rise to a bigger deformation gauge group, which, for instance, contains the elements $a\in \F_1A_0$ under the form 
$\1+a$. Such elements act on $\alpha$  in $\g_{u\C, A}$ to provide us with elements
\[ (\1 +a)\cdot \alpha \in \MC\left(\g_{u\C, A}\right) \ ,\]
This Maurer--Cartan element in $\g_{u\C, A}$ induces a Maurer--Cartan element in $\g_{\mathcal C,A}$ if and only if it factors through $\pi$: 
\[
\begin{tikzcd}
u\C \ar[rr,"(\1 +a)\cdot \alpha"]  \ar[rd, "\pi"'] & & \eend_A\\
& \oC \ar[ur, dashrightarrow]&
\end{tikzcd}
\]

\medskip

In practice, this properadic twisting procedure can be performed for instance in the following 
way. Given a properadic quadratic data $(E,R)$, that is 
$R\subset \mathcal{G}(E)^{(2)}$, we consider the data $\chi$ of: 
\begin{itemize} 
\item two linear maps $\{\chi_i : E(1,2) \to \k\}_{i=1,2}$, 
\item a certain choice of some basis elements $\left\{e^\chi_{m,n}\right\}$ of 
$E(m,n)$, for any $(m,n)\neq (1,2)$, 
\item for each $e^\chi_{m,n}$ a choice of some inputs $\left\{i^\chi_{m,n}\right\}$.
\end{itemize}
Let $u$ denote an arity $(1,0)$ degree $0$ element.
This gives rise to a space of relations $R_\chi\subset \mathcal{G}(E\oplus \k u)$, 
 made up of 
\[\mu \circ_1^1 u-\chi_1(\mu)\id\quad \text{and}\quad \mu \circ_2^1 u-\chi_2(\mu)\id\ ,\]
with $\mu\in E(1,2)$, and the composites 
\[e\circ_{i}^1 u\]
for any $e\in \left\{ e^\chi_{m,n}\right\}$, 
any $i\in \left\{i^\chi_{m,n}\right\}$, and any $(m,n)\neq (1,2)$. 

\begin{definition}[Unital extension]
The \emph{unital extension} of $\P\coloneq\P(E,R)$ by $\chi$ is the properad defined by
\[u_\chi\P\coloneq
\P\big(E\oplus \k u, R\oplus R_\chi\big)\cong
\frac{\P(E,R) \vee \U}{\left(
R_\chi
\right)}\ ,
\]
where $\vee$ stands for the coproduct of properads and where 
$\U$ stands for the properad spanned $u$~.
\end{definition}

A $u_\chi\P$-gebra is a $\P$-gebra equipped with a degree $0$ element which acts as a unit, possibly with coefficients, for some  generating binary products and such that its composite at the chosen inputs of the chosen other generating operations vanish. 

\begin{definition}[Extendable quadratic data]
A properadic quadratic data $(E,R)$ is \emph{extendable} if there exists a data $\chi$ with non-trivial 
pair of maps $\{\chi_i : E(1,2) \to \k\}_{i=1,2}$
such that the canonical map 
$
\P \hookrightarrow u_\chi\P
$ 
is a monomorphism. 
\end{definition}

For instance, if one chooses to kill all the composites of $u$ with any element of $E(m,n)$, for $(m,n)\neq (1,2)$, the extendability  condition is equivalent to the maximality of the underlying $\Sy$-bimodule of  the unital extension, that is 
$u_\chi \P\cong u \oplus \P$~.

\begin{proposition}\label{lemma:Extendable}\leavevmode
\begin{enumerate}
\item The quadratic properads $\Frob$ and $\mathrm{IFrob}$ of (involutive) Frobenius bialgebras are extendable. 

\item The quadratic properads $\mathrm{BiLie}$ and $\mathrm{IBiLie}$ of (involutive) Lie bialgebras are not extendable.
\end{enumerate}
\end{proposition}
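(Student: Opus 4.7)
The plan is to handle (1) by constructing a single explicit extension $\chi$ and verifying injectivity via a concrete gebra, and to handle (2) by showing that the antisymmetry of the Lie bracket together with the Lie bialgebra cocycle relation force the cobracket to vanish in every admissible extension.

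For part (1), since $\mu$ is commutative in both $\Frob$ and $\mathrm{IFrob}$, I would take $\chi_1(\mu)=\chi_2(\mu)=1$ and impose no further vanishing conditions (i.e.\ the family $\{e^\chi_{m,n}\}$ is empty). The resulting extension $u_\chi\P$ is then the classical properad of unital commutative (resp.\ unital involutive commutative) Frobenius bialgebras. Recall that $\Frob(m,n,g)\cong\k$ is one-dimensional, spanned by $p_{m,n}^g$, so the required injectivity amounts to showing that each basis element remains nonzero in $u_\chi\P$. To witness this, I would exhibit a unital (involutive) commutative Frobenius bialgebra in which each $p_{m,n}^g$ acts nontrivially: the ground field $\k$ with product $\mu(a,b)=ab$, coproduct $\Delta(1)=1\otimes 1$, and unit $u=1$ does the job for $\Frob$, since the resulting map $\Frob\to\End_\k$ factors through $u\Frob$ and sends every $p_{m,n}^g$ to the nonzero scalar $1$. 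For $\mathrm{IFrob}$, only genus zero operations survive, and a slightly richer example (a graded Frobenius bialgebra of the form $\k\oplus\k\cdot\eta$ equipped with an odd Frobenius pairing satisfying $\mu\circ\Delta=0$) provides the desired witness.

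For part (2), observe first that the antisymmetry of $\mu\in E(1,2)$ in $\mathrm{BiLie}$ and $\mathrm{IBiLie}$ forces $\chi_1(\mu)=-\chi_2(\mu)$, so after rescaling the non-triviality hypothesis amounts to $\chi_1(\mu)=c\neq 0$. In $u_\chi\mathrm{BiLie}$ the identity $[u,x]=c\cdot x$ then holds universally, so $\mathrm{ad}_u$ acts by the scalar $c$ on each tensor factor. Carrying out the Lie bialgebra cocycle relation $\delta([x,y])=\mathrm{ad}_x(\delta y)-\mathrm{ad}_y(\delta x)$ at the properadic level with $x=u$ gives the identity
\[c\cdot\delta y=2c\cdot\delta y-\mathrm{ad}_y(\delta u),\]
hence $\mathrm{ad}_y(\delta u)=c\cdot\delta y$ in $u_\chi\mathrm{BiLie}(2,1)$. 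Specialising further to $y=u$ yields $2c\cdot\delta u=c\cdot\delta u$, so $\delta u=0$; substituting this back gives $c\cdot\delta y=0$ for every $y$, whence $\delta=0$ in $u_\chi\mathrm{BiLie}$. Since $\delta$ is a nonzero generator of $\mathrm{BiLie}$, the canonical map $\mathrm{BiLie}\to u_\chi\mathrm{BiLie}$ is not injective for any admissible $\chi$; imposing extra vanishing relations $e^\chi_{m,n}\circ_i u=0$ only enlarges the ideal and cannot restore injectivity. The same argument applies verbatim to $\mathrm{IBiLie}$, since the cocycle relation survives the quotient $\mathrm{BiLie}\twoheadrightarrow\mathrm{IBiLie}$ and $\delta$ remains a nonzero generator.

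The main obstacle I foresee is executing the cocycle computation of part (2) entirely inside the abstract properad $u_\chi\mathrm{BiLie}$, keeping track of arities, input positions, and signs in the composites $\delta\circ\mu\circ_i^j u$ and $\mathrm{ad}_y\circ\delta\circ u$; this is mechanical but requires careful bookkeeping. For part (1), the only subtle point is pinning down a finite-dimensional involutive unital Frobenius bialgebra that realises every genus zero operation nontrivially, but this is a concrete low-dimensional construction and should present no serious difficulty.
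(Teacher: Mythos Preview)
Your approach is broadly correct and, for part (2), genuinely different from the paper's.

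For part (2), the paper does not use the cocycle relation at all. Instead it observes that the operad $\mathrm{Lie}$ sits inside $\mathrm{BiLie}$ and $\mathrm{IBiLie}$ as the sub-operad generated by $\mu$; any extension data $\chi$ restricts to extension data for $\mathrm{Lie}$, and if $\mathrm{BiLie}\hookrightarrow u_\chi\mathrm{BiLie}$ were injective then so would be $\mathrm{Lie}\to u_\chi\mathrm{BiLie}$, which factors through $u_{\chi}\mathrm{Lie}$. But $\mathrm{Lie}$ is shown not to be extendable in \cite{DotsenkoShadrinVallette22} (the Jacobi identity with one input set to $u$ gives $c\,[x,y]=0$, so $\mu$ itself dies). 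Your argument instead kills $\delta$ via the cocycle relation. Both are valid: the paper's route is shorter and modular (it reduces to a known operadic fact), while yours is self-contained and avoids the external reference. The bookkeeping you flag as the ``main obstacle'' is indeed mechanical once you keep straight that plugging $u$ into the cocycle relation produces an identity in $u_\chi\mathrm{BiLie}(2,1)$, and then plugging $u$ again lands in $(2,0)$.

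For part (1), your strategy of witnessing injectivity by an explicit gebra is sound, and the choice of $\k$ works perfectly for $\Frob$. However, your proposed witness for $\mathrm{IFrob}$ is underspecified and likely does not work as stated: for instance, on $\k\oplus\k\eta$ with the natural Frobenius coproduct one computes $\mu\Delta(1)=2\eta\neq 0$ regardless of the parity of $\eta$, and enforcing cocommutativity in the graded sense with an odd generator runs into sign obstructions on $\Delta(\eta)$. You would need either a more carefully chosen finite example or a direct argument at the properad level (e.g.\ a basis description of $u_\chi\mathrm{IFrob}$ showing that the genus-zero classes $p^0_{m,n}$ survive). The paper sidesteps this by simply citing the analogous treatment of $\mathrm{Com}$ in \cite{DotsenkoShadrinVallette22} and asserting that the same method applies.
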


\begin{proof}Recall that the involutivity relation states the genus $1$ composite of the coproduct followed by the product vanishes. 
\begin{enumerate}
\item The cases of $\Frob$ and $\mathrm{IFrob}$ can be treated in the same way as the operad $\mathrm{Com}$ of commutative algebras, see the proof of Proposition~4.21 in \cite{DotsenkoShadrinVallette22}: in these cases, one requires that $u$ is a unit for the binary product and one can require or not the binary coproduct to kill $u$. 

\item 
The two cases  $\mathrm{BiLie}$ and $\mathrm{IBiLie}$ contain the operad $\mathrm{Lie}$ which is shown in \emph{loc. cit.} not to be extendable regardless of any choices we make. 
\end{enumerate}
\end{proof}

In practice, the framework described above appears naturally in the context of the Koszul duality for properads, see \cite{Vallette07} for the homogeneous case. This homological theory was developed in \emph{loc. cit.} for reduced properads, that is having an underlying $\Sy$-bimodule concentrated in arities $(m,n)$ for $m,n \geqslant 1$, but it actually holds as well when their underlying $\Sy$-bimodule is concentrated in arities $(m,n)$ for $m \geqslant 1$ and $n\geqslant 0$, see \cite[Section~3.1]{BV24} for more details. 
Recall that a quadratic properad $\P$ admits a presentation with generating $\Sy$-bimodule $E$ and quadratic relations $R\in \mathcal{G}^{(2)}(E)$ has a \emph{Koszul dual coproperad} 
$\P^{\ac}\coloneq \mathcal{C}\left(sE, s^2 R\right)$
cogenerated by $sE$ with corelations $s^{2}R$.
The cobar construction $\P_\infty\coloneq \Omega \P^{\ac}$ provides us with a cofibrant properad, which is a resolution of $\P$ when this  properad is Koszul. 
The \emph{Koszul dual properad} is defined by the arity-wise and degree-wise linear dual, which admits the following presentation 
\[\P^{!}\coloneq \left(\P^{\ac}\right)^*\cong \P\left(s^{-1}E^*, s^{-2}R^\perp\right)~,\]
when $E$ is arity-wise and degree-wise finite dimensional by \cite[Lemma~1.30]{LV22}.

\medskip
Let us now apply the previous discussion to the Koszul dual properad of a quadratic properad. 

\begin{definition}[Twistable $\P_\infty$-gebras]\label{def:TwHoGebra}
The category of $\P_\infty$-gebras
associated to an arity-wise and degree-wise finite dimensional quadratic presentation $(E,R)$
 is said to be \emph{twistable} if the Koszul dual properad $\P^!$ is extendable and if 
the linear dual  of the extension properad forms an infinitesimal coproperad, that we denote simply by 
\[c\P^{\ac}\coloneq \left(u_\chi \P^!\right)^*~.\]
\end{definition}

\begin{remark}
The last condition of this definition comes from the following key issue: unlike algebras, operads, and even dioperads, the linear dual of an arity-wise and degree-wise finite dimensional properad does not necessarily form a infinitesimal coproperad as one might still encounter infinite sums there. The key point lies in the fact that $2$-vertex graphs can have arbitrary genus. 
A counter-example is given by the properad of Frobenius bialgebras satisfying the extra relation
$\EE=\II$~, which is one-dimensional in each arity $(m,n)$, for $m, n\geqslant 1$. 
\end{remark}

\begin{proposition}\leavevmode
\begin{enumerate}
\item The categories of shifted homotopy Lie bialgebras and shifted homotopy involutive Lie bialgebras are twistable. 

\item The categories of shifted homotopy double Poisson gebras, shifted homotopy infinitesimal balanced bialgebras, and shifted pre-Calabi--Yau algebras are twistable. 
\end{enumerate}
\end{proposition}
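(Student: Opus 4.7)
The plan is to verify, for each of the listed families, the two defining conditions of \cref{def:TwHoGebra}: extendability of the Koszul dual properad $\P^!$, and the fact that the linear dual $(u_\chi \P^!)^*$ forms a genuine infinitesimal coproperad, which amounts to a finiteness requirement on the infinitesimal decomposition.

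For part (1), I would start from the Koszul duality $\mathrm{sIBiLie}^! \cong \Frob$ of \cite{Vallette07} already used in \cref{sec:hierarchy}, together with its non-involutive companion $\mathrm{sBiLie}^! \cong \mathrm{IFrob}$. Extendability is then exactly \cref{lemma:Extendable}(1), with the data $\chi$ declaring $u$ to be a unit for the Frobenius product (and, as recorded in \emph{loc. cit.}, one has the freedom to decide whether or not the Frobenius coproduct annihilates~$u$). For the coproperad condition, I would exploit the fact that $\Frob$ and $\mathrm{IFrob}$ are at most one-dimensional in each arity-genus triple $(m,n,g)$, and that by the composition rule of \cite[Proposition~6.7]{HLV19} any decomposition of $p_{m,n}^g$ as a two-vertex composite along $k$ edges forces $m_1+m_2-k=m$, $n_1+n_2-k=n$, and $g_1+g_2+k-1=g$. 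These constraints bound all parameters in terms of $m$, $n$, and $g$, so the infinitesimal decomposition produces a finite sum for each basis element; the additional arity $(1,0)$ unit $u$ of the extension only contributes further decompositions of controlled type, so finiteness persists.

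For part (2), the same scheme applies, using the Koszul duals identified in \cite{LV22, KTV25} for double Poisson gebras and pre-Calabi--Yau algebras, and in \cite{Q23} for balanced infinitesimal bialgebras. In each case I would adjoin a unit for an associative-like binary product of $\P^!$ and verify extendability by an argument parallel to \cref{lemma:Extendable}, then reduce the infinitesimal coproperad condition to an explicit arity-and-genus count along the lines of the Frobenius case.

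The main obstacle is the second condition. As emphasized in the remark following \cref{def:TwHoGebra}, arity-wise finite-dimensionality is not enough on its own: relations that collapse the genus grading, like $\EE = \II$ in the Frobenius example, can turn the infinitesimal decomposition of a single element into an infinite sum. The substantive work for part (2) is therefore to check, case by case, that the presentations of $\P^!$ and the chosen extensions $u_\chi \P^!$ preserve enough of a genus-like bookkeeping to guarantee that only finitely many two-vertex decompositions of each basis element appear. Once this is in place, the two clauses of \cref{def:TwHoGebra} are both established and each category on the list is twistable.
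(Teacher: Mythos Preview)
Your treatment of part~(1) matches the paper's proof: both invoke \cref{lemma:Extendable} for extendability of $\Frob$ and $\mathrm{IFrob}$, and both reduce the infinitesimal coproperad condition to the finiteness of two-vertex decompositions of each basis element, which follows from the explicit genus bookkeeping you describe.

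For part~(2), your approach is sound in principle but differs from the paper's in two respects worth noting. First, for homotopy double Poisson gebras and homotopy balanced infinitesimal bialgebras, the paper does not re-prove extendability ``by an argument parallel to \cref{lemma:Extendable}'' as you propose; instead it cites that the relevant unital extensions $\mathrm{uDPois}^!$ and $\mathrm{uBIB}^!$ and the required monomorphisms $\P^!\hookrightarrow u_\chi\P^!$ are already established in \cite[Lemmata~1.33 and~1.39]{LV22} and \cite[Theorem~22]{Q23}. Your direct route would work but is more laborious, and for these richer properads the verification is not an immediate parallel of the Frobenius case. Second, for pre-Calabi--Yau algebras you miss a structural shortcut the paper exploits: the governing object is the Koszul self-dual \emph{dioperad} $\mathrm{V}$, not a properad. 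Since dioperadic compositions involve only genus-$0$ graphs, the linear dual of any arity-wise finite-dimensional dioperad is automatically an infinitesimal codioperad, and the ``genus-collapse'' obstruction you flag simply cannot arise. This replaces your proposed case-by-case arity-and-genus count by a one-line structural observation.
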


\begin{proof}\leavevmode
\begin{enumerate}
\item Their Koszul dual properads $\Frob$ and $\mathrm{IFrob}$ are extendable by \cref{lemma:Extendable} and their explicit descriptions given in \cref{sec:hierarchy} show that their linear duals produce infinitesimal coproperads each time: any basis element can only be written in a finite number of ways as the composite of two others. 

\item The first two cases have already been treated in \cite{LV22} and \cite{Q23} respectively, without the notion of an extendable quadratic data. A unital extension $\mathrm{uDPois}^!$ is defined in \cite[Definition~1.37]{LV22} and the inclusion $\mathrm{DPois}^! \hookrightarrow \mathrm{uDPois}^!$ is a consequence of Lemmata~1.33 and 1.39 of \emph{loc. cit.}. 
Similarly, a unital extension $\mathrm{uBIB}^!$ is defined in \cite[Section~3]{Q23} and the inclusion $\mathrm{BIB}^! \hookrightarrow \mathrm{uBIB}^!$ is the statement of Theorem~22 of \emph{loc. cit.}. In these two cases, 
the linear dual of these extension properads produce infinitesimal coproperads by the same argument as above.
Regarding the last example of shifted pre-Calabi--Yau algebras, one has to consider the quadratic dioperad $\mathrm{V}$ of \cite{TZ07Bis, PT19} and one has to work with the present theory but for dioperads, which works the exact same way. The dioperad $\mathrm{V}$ is Koszul self-dual, see \cite[Proposition~2.7]{LV22}, and adding a unit with respect to the associative product to its Koszul dual produces an extension dioperad, whose linear dual gives a codioperad, that is an infinitesimal coproperad with images of the infinitesimal decomposition maps made up of graphs of genus $0$. 
\end{enumerate}
\end{proof}

\begin{remark}
Modifying the degree condition on $u$ from the beginning, one can remove the shifting assumption in this statement. 
\end{remark}

{\sc Assumption.} {From now on, we will work with a quadratic properad $\P$, whose $\P_\infty$-gebras are twistable and with a 
complete graded module $A$}~.

\begin{definition}[Curved $\P_\infty$-gebras]
We call \emph{curved $\P_\infty$-gebra structures} on $A$ the Maurer--Cartan elements of the extended convolution algebra 
\[\g_{c\P^{\ac}, A}=\hom_{\Sy}\left(c\P^{\ac}, \eend_A\right)~.\]
\end{definition}

\begin{remark}
This notion of a curved $\P_\infty$-gebra is encoded by the cobar construction 
of the extension infinitesimal coproperad $c\P^{\ac}$ defined by 
$\Omega c\P^{\ac} \coloneq \left(\mathcal{G}\left( s^{-1} c\P^{\ac}\right), d_2\right)$~.
\end{remark}

In the cases of the operads $\mathrm{Ass}$ and $\mathrm{Lie}$, one recovers this way 
the notions \emph{curved $\mathrm{A}_\infty$-algebras} and \emph{curved $\mathrm{L}_\infty$-algebras} respectively, see \cite[Chapter~4]{DotsenkoShadrinVallette22}. 
In the case of the dioperad $\mathrm{V}$, one recovers the notion of a \emph{curved pre-Calabi--Yau algebra} of \cite[Section~2]{LV22}. 
In the cases of the properads $\mathrm{DPois}$ and $\mathrm{BIB}$, one recovers the notion of a \emph{curved homotopy double Poisson gebra} of \cite[Section~1.5]{LV22} and the notion of a \emph{curved homotopy infinitesimal balanced gebra} of \cite[Section~3]{Q23}. In these last two cases, there are no relations in the extension of the Koszul dual properad coming from the vanishing of the composite of the unit with a non-binary operation.

\medskip 

\begin{remark}
Applying the integration theory of complete Lie-graph algebras settled in \cite[Section~2]{CV25I}, to the extended convolution algebra, one can define a notion of \emph{$\infty$-morphism}  
$f \colon (A,\alpha) \rightsquigarrow (B, \beta)$ of curved $\P_\infty$-gebras as a degree $0$ map of complete $\Sy$-bimodules $f \colon c\P^{\ac} \to  \eend^A_B$  such that $f(u^*)\in \mathrm{F}_1 B_0$ and  satisfying the equation
$f  \rhd \alpha = \beta \lhd f$~.
They form a category under the \emph{composition} of $\infty$-morphisms defined by $g\circledcirc f$~. 
\end{remark}

\begin{definition}[Twisting procedure]
Given a curved $\P_\infty$-gebra 
$\alpha \in \MC\left(\g_{c\P^{\ac}, A}\right)$, the action on $\alpha$ 
of the gauge element $\1-a$, associated to any element $a\in \mathrm{F}_1 A_0$, 
 produces a 
curved $\P_\infty$-gebra 
\[ (\1 -a)\cdot \alpha = \alpha \lhd(\1 +a)  \in \MC\left(\g_{c\P^{\ac}, A}\right) \ ,\]
called 
\emph{twisted by $a$}, and denoted simply by $\alpha^a$~. 
\end{definition}

This formula is a consequence of \cite[Theorem~2.29]{CV25I}; it amounts to considering all the possible ways to plug $a$ or the identity at the inputs of the operations of the curved $\P_\infty$-gebra $\alpha$. This recovers the twisting procedure of \cite[Theorem~4.20]{LV22} for curved homotopy double Poisson gebras, curved pre-Calabi--Yau algebras, and curved homotopy infinitesimal balanced bialgebras by \cite[Theorem~14]{Q23}. 

\begin{remark}
Under the aforementioned terminology, $\1-a$ is an $\infty$-isotopy between $\alpha$ and the twisted curved $\P_\infty$-gebra $\alpha^a$.
\end{remark}

\begin{proposition}
Any curved $\P_\infty$-gebra twisted by an element $a$ and then by an element $b$ is equal to the curved $\P_\infty$-gebra twisted by the element $a+b$, that is $\left(\alpha^a\right)^b=\alpha^{a+b}$~. 
\end{proposition}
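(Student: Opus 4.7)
The plan is to rely on the fact that the formula $(\1 - y) \cdot \alpha = \alpha \lhd (\1 + y)$ defines a \emph{left action} of the deformation gauge group $\mathcal G_{c\P^{\ac}, A}$ on $\MC\left(\g_{c\P^{\ac},A}\right)$, as furnished by \cite[Theorem~2.29]{CV25I}. Given this, the action property $(gh)\cdot \alpha = g \cdot (h \cdot \alpha)$ applied to $g = \1 - b$ and $h = \1 - a$ yields
\[
(\alpha^a)^b = \left((\1 - b)\cdot_{\mathcal G}(\1 - a)\right)\cdot \alpha,
\]
so the statement reduces to the identity $(\1 - b)\cdot_{\mathcal G}(\1 - a) = \1 - (a + b)$ in the gauge group.

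The group product is induced by the convolution $\circledcirc$ on $\hom_{\Sy}\left(c\P^{\ac}, \eend_A\right)$, hence expands as
\[
(\1 - b) \circledcirc (\1 - a) = \1 - a - b + b \circledcirc a,
\]
and it remains to show that $b \circledcirc a = 0$. Both $a, b \in \mathrm{F}_1 A_0$ are, under the identification $A = \eend_A(1,0)$, supported on the sole arity-$(1,0)$ basis vector of $c\P^{\ac}$, namely the linear dual $u^*$ of the unit $u \in u_\chi \P^!$, and vanish on every other basis element (including the counit). Since $\circledcirc$ is computed via the infinitesimal decomposition of $c\P^{\ac}$, the value $(b \circledcirc a)(c)$ is nonzero only if the infinitesimal decomposition of $c$ contains a term of the form $u^* \otimes u^*$.

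The key step is to rule out such a decomposition. Dually, this amounts to showing that $u$ admits no nontrivial two-vertex composite in the extension properad $u_\chi \P^!$. This is clear from the presentation $u_\chi \P^! = \P^!\big(E \oplus \k u,\, R \oplus R_\chi\big)$: as $u$ is a generator of arity $(1,0)$ and degree $0$, any two-vertex composite producing $u$ would have to involve a generator of arity $(1,0)$ plugged into another operation, which by the arity balance produces an element of arity $(m, n)$ with $n = 0$ and $m \geq 1$ strictly bigger than the arity contribution of $u$ alone (unless the other vertex is the trivial identity, which is excluded from the reduced decomposition). Hence $b \circledcirc a = 0$, and the conclusion follows.

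The main obstacle is verifying this arity-based vanishing rigorously within the conventions of $c\P^{\ac}$: one must check that the defining relations $R_\chi$ do not conspire to produce a hidden composite equal to $u$ in $u_\chi \P^!$. An alternative, purely combinatorial proof unpacks $\lhd$ directly: at each leaf of the iterated decomposition of $c \in c\P^{\ac}$ that carries a $u^*$, the successive application of $\1 + a$ then $\1 + b$ offers exactly the three outcomes $\{\id, a, b\}$ (since $a$ has no further inputs to receive $b$), matching precisely the outcomes of the single application of $\1 + a + b$, which gives a term-by-term bijection.
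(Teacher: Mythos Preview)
Your approach matches the paper's: both invoke the group action from \cite[Theorem~2.29]{CV25I} and reduce to the product identity $(\1-b)\circledcirc(\1-a)=\1-(a+b)$ in the gauge group. The only difference is that the paper obtains this identity by a direct citation of \cite[Theorem~2.6]{CV25I}, whereas you unpack it via the arity argument---which is fine, though note that $\circledcirc$ is not bilinear in general, so the expansion ``$\1 - a - b + b\circledcirc a$'' is better phrased as your final combinatorial observation that no connected two-level graph can have two arity-$(1,0)$ vertices.
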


\begin{proof}
This a direct corollary of the fact that we are dealing with a group action whose product satisfies
\[(\1 -b)\circledcirc (\1-a)=\1 -(a+b)~,\]
by \cite[Theorem~2.6]{CV25I}.
\end{proof}

The issue is now to settle a twisting procedure for $\P_\infty$-gebras. The inclusion of properads 
$\P^! \hookrightarrow u_\chi\P^!$ induces the projection of infinitesimal coproperads 
$c\P^{\ac} \twoheadrightarrow \overline{\P}^{\ac}$ and thus the inclusion of complete Lie-graph algebras
$
\g_{\P^{\ac}, A} \hookrightarrow
\mathfrak{g}_{c\P^{\ac}, A}$~. 
So any $\P_\infty$-gebra structure on $A$ can be viewed as a curved $\P_\infty$-gebra structure that can thus be twisted. 
But the result will \emph{a priori} produce a curved $\P_\infty$-gebra. It remains to measure the difference between these two notions. 
To this extent, we consider the $\Sy$-bimodule $\mathcal{K}$ which is the kernel of the projection map: 
\[\mathcal{K} \hookrightarrow c\P^{\ac} \twoheadrightarrow {\P}^{\ac}~.\]

Putting all the above arguments together, we finally get the twisting procedure for $\P_\infty$-gebras. 

\begin{theorem}\label{thm:TwProp} 
For any complete $\P_\infty$-gebra structure $\bar\alpha \in \MC(\g_{\P^{\ac},A})$ on $A$ and for any 
element  $a\in \F_1A_0$, 
 the Maurer--Cartan element 
 \[(\1 -a)\cdot \alpha = \alpha \lhd(\1 +a)  \in \MC\left(\g_{c\P^{\ac}, A}\right)
 \] 
 of the extended convolution algebra is a $\P_\infty$-gebra structure on $A$ if and only if its 
 image on $\mathcal{K}$ vanishes: 
 \[\big(\alpha \lhd(\1 +a)\big)(\mathcal{K})=0~.\]
 \end{theorem}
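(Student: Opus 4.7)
The plan is to exploit the short exact sequence of $\Sy$-bimodules $\mathcal{K} \hookrightarrow c\P^{\ac} \twoheadrightarrow \P^{\ac}$, which by applying $\hom_\Sy(-,\eend_A)$ dualises to a left exact sequence of complete Lie-graph algebras
\[
0 \to \g_{\P^{\ac},A} \hookrightarrow \g_{c\P^{\ac},A} \to \hom_\Sy(\mathcal{K},\eend_A)~,
\]
realising $\g_{\P^{\ac},A}$ as precisely the subspace of elements of $\g_{c\P^{\ac},A}$ whose restriction to $\mathcal{K}$ is zero. First I would verify that this inclusion is actually a morphism of complete Lie-graph algebras: this is a direct consequence of the fact that $c\P^{\ac} \twoheadrightarrow \P^{\ac}$ is a morphism of filtered infinitesimal coproperads, whose infinitesimal decomposition maps determine the Lie-graph operations via pre-composition on the convolution side.

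The forward direction of the theorem is then almost tautological: if the twisted element $\alpha \lhd (\1+a)$ is a $\P_\infty$-gebra structure on $A$, that is, if it lies in $\g_{\P^{\ac},A}$ viewed as a subspace of $\g_{c\P^{\ac},A}$, then by the characterisation above its image on $\mathcal{K}$ vanishes. For the converse, suppose $(\alpha \lhd (\1+a))(\mathcal{K}) = 0$. Then $\alpha \lhd (\1+a)$ factors uniquely through $\P^{\ac}$, that is, it defines an element $\bar\beta \in \g_{\P^{\ac},A}$ whose image under the inclusion recovers the twisted element.

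It remains to check that $\bar\beta$ is a Maurer--Cartan element of $\g_{\P^{\ac},A}$. By the general gauge action Theorem~2.29 of~\cite{CV25I} applied in the extended convolution Lie-graph algebra, the element $\alpha \lhd (\1 +a) = (\1-a)\cdot \alpha$ is a Maurer--Cartan element of $\g_{c\P^{\ac},A}$, since $\alpha$ is itself Maurer--Cartan there (the image of a $\P_\infty$-gebra structure under the embedding is a curved $\P_\infty$-gebra with trivial curvature component). Because the inclusion $\g_{\P^{\ac},A} \hookrightarrow \g_{c\P^{\ac},A}$ is an injective morphism of complete dg Lie-graph algebras, the Maurer--Cartan equation for $\bar\beta$ in $\g_{\P^{\ac},A}$ is equivalent to the Maurer--Cartan equation for its image in $\g_{c\P^{\ac},A}$, concluding the proof.

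The main potential obstacle is confirming that the inclusion is a strict morphism of Lie-graph algebras, in particular that all graph-indexed operations $\gra \in \dcGra$ on $\hom_\Sy(\P^{\ac},\eend_A)$ are the restrictions of the corresponding ones on $\hom_\Sy(c\P^{\ac},\eend_A)$; this reduces to the functoriality of the convolution Lie-graph algebra construction of \cite[Proposition~1.28]{CV25I} with respect to the projection of infinitesimal coproperads $c\P^{\ac} \twoheadrightarrow \P^{\ac}$, and to the fact that the differential on $\g_{c\P^{\ac},A}$ restricts to the differential on $\g_{\P^{\ac},A}$ — both of which follow from the defining naturality of these structures. Once this is in place, the argument reduces, as sketched, to the elementary fact that the kernel of the restriction map along a surjection is characterised by vanishing on the kernel.
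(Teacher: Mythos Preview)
Your proposal is correct and follows essentially the same approach as the paper: the paper's proof is a one-line ``straightforward consequence of the aforementioned arguments,'' and those arguments are precisely the ones you spell out --- the projection $c\P^{\ac}\twoheadrightarrow\P^{\ac}$ induces an inclusion of complete Lie-graph algebras, the gauge action of $\1-a$ on $\alpha$ produces a Maurer--Cartan element in the extended algebra by \cite[Theorem~2.29]{CV25I}, and this element lies in the image of the inclusion (i.e., defines a $\P_\infty$-gebra) if and only if it vanishes on the kernel $\mathcal{K}$. Your additional care in verifying that the inclusion is a morphism of Lie-graph algebras is appropriate and is exactly the functoriality the paper invokes implicitly.
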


\begin{proof}
The proof is a straightforward consequence of the aforementioned arguments.
\end{proof}

This conceptual approach motives the following general definition. 

\begin{definition}[$\P_\infty$-Maurer--Cartan equations]\label{def:PinftyMCequation}
The equations 
 \[\big(\alpha \lhd(\1 +a)\big)(k)=0~,\]
running over a basis $\{k\}$ of $\mathcal{K}$,  are called the \emph{$\P_\infty$-Maurer--Cartan equations}.
An element $a\in \F_1A_0$ satisfying them is called a \emph{Maurer--Cartan element} of the $\P_\infty$-gebra $\alpha$. 
\end{definition}

In the case of the operads $\mathrm{Ass}$ and $\mathrm{Lie}$, the kernel $\mathcal{K}$ is the one-dimensional module spanned by $u^*$.
This way, one recovers conceptually the usual Maurer--Cartan equations 
\[da+\mu_2(a,a)+\mu_3(a,a,a)+ \cdots =0
\qquad \text{and} \qquad 
da+\tfrac12\ell_2(a,a)+\tfrac{1}{3!}\ell_3(a,a,a)+ \cdots =0
\]
of $\mathrm{A}_\infty$-algebras and $\mathrm{L}_\infty$-algebras respectively.  The Maurer--Cartan equation for pre-Calabi--Yau algebras is explicitly given in \cite[Definition~4.21]{LV22}, where it is shown to coincide with the Maurer--Cartan equation of the underlying 
$\mathrm{A}_\infty$-algebra.

\medskip

Let us now treat in detail the twisting procedure for shifted homotopy involutive Lie bialgebras,  using the notations introduced in \cref{sec:hierarchy}.  
First, the Koszul dual properad $\mathrm{sIBiLie}^!\cong\Frob$ is extendable: we consider its extension given by 
the properad $\uFrob$ encoding 
\emph{unital Frobenius bialgebras} where one unit for the commutative product is added and no extra relation between the unit and the coproduct is required. Its linear dual infinitesimal coproperad $\uFrob^*$ admits for basis the elements 
$\left\{c_{m,n}^g\right\}_{m \geqslant 1, n, g \geqslant 0}$, which extend the basis of $\Frob^*$ to $n=0$~. 
So the complete convolution Lie-graph algebra encoding 
shifted curved homotopy involutive Lie bialgebras is the quantum Weyl algebra
\[\g_{{\mathcal \uFrob^*}, A}\cong 
\prod_{\substack{m \geqslant 1 \\ n,g  \geqslant 0}} \hom\left(A^{\hat{\odot} n}, A^{\hat{\odot} m}\right) \hbar^g\]
and the kernel $\mathcal{K}$ of the canonical projection $\uFrob^* \twoheadrightarrow \Frob^*$ is the $\Sy$-bimodule 
concentrated in input arity $0$ with
$\mathcal{K}(m,0)\cong\k\left\{c_{m,0}^g\, , \ g\geqslant 0\right\}$~. Given a shifted homotopy involutive Lie bialgebra structure
$\left\{\nu_{m,n}^g\right\}_{m, n \geqslant 1, g \geqslant 0}$ on a complete graded module $A$, its Maurer--Cartan elements are the  elements 
$a\in \F_1A_0$ satisfying the Maurer--Cartan equations: 
\[
\sum_{n\geqslant 1} \tfrac{1}{n!} \nu^g_{m,n}(a, \ldots, a)=0
\]
for $m\geqslant 1$ and $g\geqslant 0$. 
(Notice that for $m=1$ and $g=0$, we recover the Maurer--Cartan equation of the shifted homotopy Lie sub-algebra.) 
In this case, the formulas 
\[
\left(\nu_{m,n}^g\right)^a=\sum_{k\geqslant 0}
\tfrac{1}{k!}
\nu_{m,k+n}^g(\underbrace{a, \ldots, a}_{k \ \text{times}}, -, \ldots, -)
\]
define the shifted homotopy involutive Lie bialgebra twisted by $a$. 
For shifted homotopy Lie bialgebras, that is shifted homotopy involutive Lie bialgebras concentrated in $g=0$, this recovers the 
twisting procedure described in \cite[Appendix~B]{MW15}, which is shown there to admit a nice application to the cohomology classes of the moduli spaces of curves with marked points. 

\medskip

This is the special case of \cite[Proposition~9.3]{CFL15} when $\mathfrak{m}_{l,g}=0$, except for $l=1$ and $g=0$ where 
$\mathfrak{m}_{1,0}=a$.
It is worth noticing that the present theory includes the full version of this result as follows. The entire space 
\[\hom_{\Sy}\left( \mathcal{K}, \eend_A\right)\cong \prod_{\substack{m \geqslant 1 \\ g  \geqslant 0}}  A^{\hat{\odot} m} \hbar^g\]
is a direct summand of the complete convolution Lie-graph algebra $\g_{{\mathcal \uFrob^*}, A}$ whose elements 
\[\left\{a_{m}^g\in \allowbreak \F_1 \left(A^{\hat{\odot} m}\right)_0\right\}_{m\geqslant 1, g\geqslant 0}\]  can be used to twist any shifted (curved) homotopy involutive Lie bialgebra $\alpha$ using the deformation gauge group action: 
\[\left(\1 - \sum_{m\geqslant 1 \atop g\geqslant 0}a_{m}^g\right)\cdot \alpha = \alpha \lhd\left(\1 +\sum_{m\geqslant 1 \atop g\geqslant 0}a_{m}^g\right)~.\]
Such a formula amounts to plugging the elements $\left\{a_{m}^g\right\}_{m\geqslant 1, g\geqslant 0}$ and the identity at the inputs of $\left\{\nu_{m,n}^g\right\}_{m, n \geqslant 1, g \geqslant 0}$ in all the possible ways, which coincides with the formula of \emph{loc. cit.}. 

\begin{remark}
The procedure described here twists homotopy gebra structures with elements $a\in A$ but it would work as well with ``coelements'' 
$f \in A^*$: all the arguments are similar under the upside-down (input-output) mirror symmetry. 
Since the properads $\mathrm{BiLie}\text{-}\mathrm{Frob}_\diamond$ and $\mathrm{BiLie}_\diamond\text{-}\mathrm{Frob}$ are symmetric with respect to the 
upside-down mirror symmetry, the formulas for the twisting procedure with a coelement are the same, up to this symmetry, than the formulas for the twisting with an element.
For instance, the notion of a ``cocurved'' Lie bialgebra can already be found in \cite[Definition~2.24]{Safronov2018}.
\end{remark}

All these arguments show that, like the Koszul hierarchy, the fundamental twisting procedure established here 
by means of deformation gauge group action
has many degrees of freedom and a wide range of possible applications.

\bibliographystyle{alpha}
\bibliography{bib}

\def\cprime{$'$}
\begin{thebibliography}{CPRNW24}

\bibitem[ABCO24]{ABCO17}
Jorgen~Ellegaard Andersen, Ga\"etan Borot, Leonid~O. Chekhov, and Nicolas
  Orantin.
\newblock The {ABCD} of topological recursion.
\newblock {\em Adv. Math.}, 439(109473), 2024.

\bibitem[Agu00]{Aguiar00}
Marcelo Aguiar.
\newblock Infinitesimal {H}opf algebras.
\newblock In {\em New trends in {H}opf algebra theory ({L}a {F}alda, 1999)},
  volume 267 of {\em Contemp. Math.}, pages 1--29. Amer. Math. Soc.,
  Providence, RI, 2000.

\bibitem[BF12]{BF12}
Andrea Bonfiglioli and Roberta Fulci.
\newblock {\em Topics in noncommutative algebra. The theorem of Campbell,
  Baker, Hausdorff and Dynkin}, volume 2034 of {\em Lecture Notes in
  Mathematics}.
\newblock Springer, Heidelberg, 2012.

\bibitem[BL91]{BarnesLambe91}
Donald~W. Barnes and Larry~A. Lambe.
\newblock A fixed point approach to homological perturbation theory.
\newblock {\em Proc. Amer. Math. Soc.}, 112(3):881--892, 1991.

\bibitem[Bor20]{B20}
Ga\"etan Borot.
\newblock Topological recursion and geometry.
\newblock {\em Rev. Math. Phys.}, 32(10):2030007, 2020.

\bibitem[BV25]{BV24}
Ga\"etan Borot and Bruno Vallette.
\newblock The higher algebra of topological recursion.
\newblock {\em to appear}, 2025.

\bibitem[CFL20]{CFL15}
Kai Cieliebak, Kenji Fukaya, and Janko Latschev.
\newblock Homological algebra related to surfaces with boundary.
\newblock {\em Quantum Topol.}, 11(4):691--837, 2020.

\bibitem[CMW16]{CMW16}
Ricardo Campos, Sergei Merkulov, and Thomas Willwacher.
\newblock The {F}robenius properad is {K}oszul.
\newblock {\em Duke Math. J.}, 165(15):2921--2989, 2016.

\bibitem[CPRNW24]{campos2019lie}
Ricardo Campos, Dan Petersen, Daniel Robert-Nicoud, and Felix Wierstra.
\newblock Lie, associative and commutative quasi-isomorphism.
\newblock {\em Acta Math.}, 233(2):195--238, 2024.

\bibitem[CV25]{CV25I}
Ricardo Campos and Bruno Vallette.
\newblock Integration theory of {L}ie-graph algebras.
\newblock {\em arXiv preprint}, 2025.
\newblock Available at
  \href{https://arxiv.org/abs/2510.06934}{arXiv:2510.06934}.

\bibitem[DSV16]{DotsenkoShadrinVallette16}
Vladimir Dotsenko, Sergey Shadrin, and Bruno Vallette.
\newblock Pre-{L}ie deformation theory.
\newblock {\em Mosc. Math. J.}, 16(3):505--543, 2016.

\bibitem[DSV24]{DotsenkoShadrinVallette22}
Vladimir Dotsenko, Sergey Shadrin, and Bruno Vallette.
\newblock {\em Maurer-{Cartan} methods in deformation theory. {The} twisting
  procedure}, volume 488 of {\em Lond. Math. Soc. Lect. Note Ser.}
\newblock Cambridge: Cambridge University Press, 2024.

\bibitem[EML53]{EilenbergMacLane53}
Samuel Eilenberg and Saunders Mac~Lane.
\newblock On the groups of {$H(\Pi,n)$}. {I}.
\newblock {\em Ann. of Math. (2)}, 58:55--106, 1953.

\bibitem[Emp24]{Emprin24}
Coline Emprin.
\newblock Kaledin classes and formality criteria.
\newblock {\em ArXiv preprint}, 2024.
\newblock Available at
  \href{https://arxiv.org/abs/2404.17529}{arXiv:2404.17529}.

\bibitem[EO07]{EO07}
Bertrand Eynard and Nicolas Orantin.
\newblock Invariants of algebraic curves and topological expansion.
\newblock {\em Commun. Numb. Th. Phys.}, 1(2), 2007.

\bibitem[EO09]{EO09}
Bertrand Eynard and Nicolas Orantin.
\newblock Topological recursion in random matrices and enumerative geometry.
\newblock {\em J. Phys. A: Math. Theor.}, 42(29), 2009.

\bibitem[Eyn14]{EICM}
Bertrand Eynard.
\newblock An overview of topological recursion.
\newblock {\em Proceedings of the ICM}, 3:1063--1086, 2014.

\bibitem[Eyn16]{Ebook}
Bertrand Eynard.
\newblock {\em Counting surfaces}.
\newblock Progress in Mathematics. Birkh\"auser, 2016.

\bibitem[GCTV12]{GCTV12}
Imma Galvez-Carrillo, Andy Tonks, and Bruno Vallette.
\newblock Homotopy {B}atalin--{V}ilkovisky algebras.
\newblock {\em J. Noncommut. Geom.}, 6(3):539--602, 2012.

\bibitem[HLV21]{HLV19}
Eric Hoffbeck, Johan Leray, and Bruno Vallette.
\newblock Properadic homotopical calculus.
\newblock {\em Int. Math. Res. Not.}, 2021(5):3866--3926, 2021.

\bibitem[HLV25]{HLV24}
Eric Hoffbeck, Johan Leray, and Bruno Vallette.
\newblock Simplicial properadic homotopy.
\newblock {\em arXiv preprint}, 2025.
\newblock Available at
  \href{https://arxiv.org/abs/2505.22004}{arXiv:2505.22004}.

\bibitem[Kos85]{Koszul85}
Jean-Louis Koszul.
\newblock Crochet de {S}chouten--{N}ijenhuis et cohomologie.
\newblock {\em Ast\'erisque}, (Numero Hors Serie):257--271, 1985.
\newblock The mathematical heritage of {\'E}lie Cartan (Lyon, 1984).

\bibitem[KS18]{KS18}
Maxim Kontsevich and Yan Soibelman.
\newblock Airy structures and symplectic geometry of topological recursion.
\newblock In C.C.M. Liu and M.~Mulase, editors, {\em Topological recursion and
  its influence in analysis, geometry, and topology}, volume 100 of {\em Proc.
  Sympos. Pure Math.}, pages 433--489. Amer. Math. Soc., Providence, RI, 2018.

\bibitem[KTV23]{KTV23}
Maxim {Kontsevich}, Alex {Takeda}, and Yiannis {Vlassopoulos}.
\newblock Smooth {Calabi}-{Yau} structures and the noncommutative {Legendre}
  transform.
\newblock {\em ArXiv preprint}, 2023.
\newblock Available at
  \href{https://arxiv.org/abs/2301.01567}{arXiv:2301.01567}.

\bibitem[KTV25]{KTV25}
Maxim Kontsevich, Alex Takeda, and Yiannis Vlassopoulos.
\newblock Pre-{C}alabi--{Y}au algebras and topological quantum field theories.
\newblock {\em Eur. J. Math.}, 11(1):101, 2025.
\newblock Id/No 15.

\bibitem[Ler20]{Leray19protoII}
Johan Leray.
\newblock Protoperads {II}: {Koszul} duality.
\newblock {\em J. {\'E}c. Polytech., Math.}, 7:897--941, 2020.

\bibitem[LV12]{LodayVallette12}
Jean-Louis Loday and Bruno Vallette.
\newblock {\em Algebraic operads}, volume 346 of {\em Grundlehren der
  Mathematischen Wissenschaften [Fundamental Principles of Mathematical
  Sciences]}.
\newblock Springer-Verlag, Berlin, 2012.

\bibitem[LV22]{LV22}
Johan Leray and Bruno Vallette.
\newblock Pre-{C}alabi--{Y}au algebras and homotopy double {P}oisson gebras.
\newblock {\em arXiv preprint}, 2022.
\newblock Available at
  \href{https://arxiv.org/abs/2203.05062}{arXiv:2203.05062}.

\bibitem[Mar15a]{Markl14}
Martin Markl.
\newblock Higher braces via formal (non)commutative geometry.
\newblock In {\em Geometric methods in physics. XXXIII workshop,
  Bia{\l}owie\.za, Poland, June 29 -- July 5, 2014}, pages 67--81. Cham:
  Birkh{\"a}user/Springer, 2015.

\bibitem[Mar15b]{Markl15}
Martin Markl.
\newblock On the origin of higher braces and higher-order derivations.
\newblock {\em J. Homotopy Relat. Struct.}, 10(3):637--667, 2015.

\bibitem[MV09a]{MerkulovVallette09I}
Sergei Merkulov and Bruno Vallette.
\newblock Deformation theory of representations of prop(erad)s. {I}.
\newblock {\em J. Reine Angew. Math.}, 634:51--106, 2009.

\bibitem[MV09b]{MerkulovVallette09II}
Sergei Merkulov and Bruno Vallette.
\newblock Deformation theory of representations of prop(erad)s. {II}.
\newblock {\em J. Reine Angew. Math.}, 636:123--174, 2009.

\bibitem[MV20]{ManinVallette2020}
Yuri~I. Manin and Bruno Vallette.
\newblock Monoidal structures on the categories of quadratic data.
\newblock {\em Doc. Math.}, 25:1727--1786, 2020.

\bibitem[MW15]{MW15}
Sergei Merkulov and Thomas Willwacher.
\newblock Props of ribbon graphs, involutive lie bialgebras and moduli spaces
  of curves, 2015.
\newblock Available at
  \href{https://arxiv.org/abs/1511.07808}{arXiv:1511.07808}.

\bibitem[PT19]{PT19}
Kate Poirier and Thomas Tradler.
\newblock Koszuality of the {{\(\mathcal{V}^{(d)}\)}} dioperad.
\newblock {\em J. Homotopy Relat. Struct.}, 14(2):477--507, 2019.

\bibitem[Que23]{Q23}
Alexandre Quesney.
\newblock Balanced infinitesimal bialgebras, double {P}oisson gebras and
  pre-{C}alabi--{Y}au algebras.
\newblock {\em arXiv preprint}, 2023.
\newblock Available at
  \href{https://arxiv.org/abs/2312.14893}{arXiv:2312.14893}.

\bibitem[Qui69]{Quillen69}
Daniel Quillen.
\newblock Rational homotopy theory.
\newblock {\em Ann. of Math. (2)}, 90:205--295, 1969.

\bibitem[Saf18]{Safronov2018}
Pavel Safronov.
\newblock Braces and {Poisson} additivity.
\newblock {\em Compos. Math.}, 154(8):1698--1745, 2018.

\bibitem[Sul77]{Sullivan77}
Dennis Sullivan.
\newblock Infinitesimal computations in topology.
\newblock {\em Inst. Hautes \'Etudes Sci. Publ. Math.}, (47):269--331 (1978),
  1977.

\bibitem[TZ07]{TZ07Bis}
Thomas Tradler and Mahmoud Zeinalian.
\newblock Algebraic string operations.
\newblock {\em \(K\)-Theory}, 38(1):59--82, 2007.

\bibitem[Val07]{Vallette07}
Bruno Vallette.
\newblock A {K}oszul duality for props.
\newblock {\em Trans. of Amer. Math. Soc.}, 359:4865--4993, 2007.

\bibitem[VdB08]{VdB08}
Michel Van~den Bergh.
\newblock Double {P}oisson algebras.
\newblock {\em Trans. Amer. Math. Soc.}, 360(11):5711--5769, 2008.

\bibitem[Yal18]{Yalin18}
Sinan Yalin.
\newblock Realization spaces of algebraic structures on cochains.
\newblock {\em Int. Math. Res. Not. IMRN}, (1):236--291, 2018.

\end{thebibliography}

\end{document}